\theoremstyle{definition}
\newtheorem{thm}{Theorem}[section]
\newtheorem{lem}[thm]{Lemma}
\newtheorem{defin}[thm]{Definition}
\numberwithin{equation}{section}
\newcommand{\subjclass}[1]{\bigskip\noindent\emph{2010 Mathematics Subject Classification:}\enspace#1}
\newcommand{\keywords}[1]{\noindent\emph{Keywords:}\enspace#1}
\begin{document}


\baselineskip=17pt


\title{Global weak solvability, continuous dependence on data, and large time growth of swelling moving interfaces}

\author{Kota Kumazaki\\
1-14 Bunkyo-cho, Nagasaki-city, Nagasaki, 851-8521, Japan\\
k.kumazaki@nagasaki-u.ac.jp\\
Adrian Muntean\\
Universitetsgatan 2, 651 88 Karlstad, Sweden\\
}

\date{}

\maketitle


\begin{abstract}
We prove a global existence result for weak solutions to a free boundary problem with flux boundary conditions describing swelling along a halfline. Additionally, we show that solutions are not only unique but also depend continuously on data and parameters. The key observation is  that the structure of our system of partial differential equations allows us to show that the moving {\em a priori} unknown interface never disappears. As main ingredients of the global existence proof, we rely on a local weak solvability result for our problem (as reported in \cite{KA}), uniform energy estimates of the solution, integral estimates on quantities defined at the free boundary, as well as a fine pointwise lower bound for the position of the moving boundary. Some of the estimates are time independent. They allow us to explore  the large-time behavior of the position of the moving boundary. The approach is specific to one-dimensional settings.


\subjclass{Primary 35R35; Secondary 35B45, 35K61}

\keywords{Moving boundary problem, swelling, {\em a priori} estimates, global weak solvability, initial boundary value problems for nonlinear parabolic equations}
\end{abstract}

\section{Introduction}

Motivated by our attempt to understanding theoretically the formation of microscopic ice-lenses  growing inside unsaturated porous materials (like leftovers of hydration reactions in concrete, see particularly \cite{Setzer} but also \cite{Fin,Liu,Keener}) exposed to a cold weather, we proposed in our recent work \cite{KA} a one dimensional free boundary problem (FBP) to describe the swelling of water into a one-dimensional halfline. We have shown in \cite{KA} that the FBP is weakly solvable locally in time. In this paper, we prove that the  existence of our concept of solution can actually be extended globally in time.  We point out here that the word "halfline" refers  to our geometric description of a microscopic pore of the heterogeneous material. 
Two natural questions can be asked in this context:  
\begin{itemize}
\item[(Q1)] 
{\em How far the incoming (water) content can actually push the {\em a priori unknown} position of the moving boundary of swelling?}
\item[(Q2)]  {\em Can the microscopic pore eventually become empty once the swelling process has started?}
\end{itemize}
The question (Q1) motivates us to choose for a free boundary modeling strategy of the swelling process, while the answer to (Q2) turns out to be able to clarify a concept of global existence of solutions to the evolution problem behind (Q1).

It is worth noting that from the mathematical standpoint, our free boundary problem resembles remotely the classical one phase Stefan problem and its variations for handling unsaturated flow through capillary fringes, superheating, phase transitions, or evaporation; compare, for instance, \cite{Evaporation, disso-precipi, Helmig, Xie} and references cited therein. Our contribution is in line with the existing mathematical modeling and analysis work of swelling by Fasano and collaborators (see \cite{FMP,FM}, e.g.),  as well as of others including  e.g. Refs. \cite{Calderer,Otani,Z}.

We now describe the setting of our equations to be viewed in dimensionless form. For $a, T\in (0,+\infty)$, $[0,T]$ is the timespan and $[a,+\infty)$ represents one pore (pocket, halfline). The variables are $t\in [0,T]$ and $z\in [a,+\infty)$. 
The boundary $z=a$ denotes the edge of the pore in contact with wetness.  The interval $[a, s(t)]$ indicates the region of diffusion of the  water content $u(t)$, where $s(t)$ is the moving interface of the water region.  The function $u(t)$ acts in the non-cylindrical region $Q_s(T)$ defined by
\begin{align*}
& Q_s(T):=\{(t, z) | 0<t<T, \ a<z<s(t) \}. 
\end{align*}

Our free boundary problem, which we denote by $(\mbox{P})_{u_0, s_0, b}$, is as follows: Find the pair $(u(t,z), s(t))$ satisfying the following set of equations as well as initial and boundary conditions, viz. 
\begin{align}
& u_t-ku_{zz}=0 \mbox{ for }(t, z)\in Q_s(T), \label{p1-1}\\
& -ku_z(t, a)=\beta(b(t)-Hu(t, a)) \mbox{ for }t\in(0, T), \label{p1-2}\\
& -ku_z(t, s(t))=u(t, s(t))s_t(t) \mbox{ for }t\in (0, T), \label{p1-3}\\
& s_t(t)=a_0(u(t, s(t))-\varphi(s(t))) \mbox{ for }t\in (0, T), \label{p1-4}\\
& s(0)=s_0, u(0, z)=u_0(z) \mbox{ for }z \in [a, s_0]. \label{p1-5}
\end{align}
Here $k>0$ is a diffusion constant, $\beta$ is a given adsorption function on $\mathbb{R}$ that is equal to 0 for negative input and takes a positive value for positive input, $b$ is a given moisture threshold function on $[0, T]$,  $H$ and $a_0$ are further given (positive) constants, $\varphi$ is our breaking function defined on $\mathbb{R}$, while  $s_0$ and $u_0$ are the initial data. 

From the physical point of view, (1.1) is the diffusion equation displacing the moisture content $u$ in the unknown region $[a, s]$; the boundary condition (1.2),  imposed at $z=a$,  implies that the moisture content $b$ inflows if $b$ is present  at $z=a$ in a larger amount than $u$ while the moisture flow stops otherwise. The boundary condition (1.3) at $z=s(t)$ describes the mass conservation at the moving boundary. The free boundary condition (1.4) is of kinetic type, i.e. it is an explicit description of the speed of the moving boundary having two competing components -- the mechanisms of spreading $a_0(u(t, s(t))$ and of  breaking $-a_0\varphi(s(t)))$.  For the problem 
$(\mbox{P})_{u_0, s_0, b}$ to make sense (both mathematically and physically), we assume suitable conditions for $\beta$, $\varphi$, $s_0$ and $u_0$ so that one can speak about unique solutions $(s, u)$ on $[0, T_0]$ for some $0<T_0\leq T$. 

The aim of this paper is to construct a global-in-time  solution of $(\mbox{P})_{u_0, s_0, b}$.
For this purpose, we obtain uniform estimates for solutions with respect to time $t$ (Section 3). Based on such uniform estimates, we can think of extending solutions in time. Let $[0, T^*)$ be the maximal time interval of the existence of a solution $(s, u)$ to our FBP. 
For such extensions of local solutions to $(\mbox{P})_{u_0, s_0, b}$ to take place, we need to consider $s(T^*)$ and $u(T^*)$ as initial data. 
The main ingredient in constructing a global solution to $(\mbox{P})_{u_0, s_0, b}$ lies on proving that $s(T^*)>a$, i.e. singularities due to the use of the employed Landau-type fixed domain transformation  are not captured in the solution. 
For instance,  consider for a moment $(\mbox{P})_{u(T^*), s(T^*), b}$, which is in fact  the problem (\ref{p1-1})--(\ref{p1-4}) with $s(T^*)$ and $u(T^*)$ as initial data. If we have $s(T^*)=a$, then there is no space for swelling at an initial time, and therefore, we can not reserve a domain so that  the solution of $(\mbox{P})_{u(T^*), s(T^*), b}$ develops. In this regard, by using the structure of equations and boundary conditions well, we succeed to prove that the free boundary has a pointwise lower bound which is strictly grater than $a$. This result indicates that the position of the free boundary is always grater than $a$ as time elapses, guaranteeing that $s(T^*)>a$. This is an essential ingredient of our proof.


The paper is organized as follows: In Section \ref{na}, we state the used notation and assumptions. We present our results in Section \ref{results}: our main theorem concerns the existence and uniqueness of a globally-in-time solution to the FBP modeling swelling along a halfline. We complement this with a result on the stability of solutions with respect to the initial position of the free boundary and initial moisture concentration as well as to the moisture content $b(t)$ entering the production term by Henry's law (\ref{p1-2}). Finally, to get a better feeling on the quality of the model, we investigate in Section \ref{large} the large time behavior of the free boundary position. Essentially the moving interfaces grows indefinetely unless the production term by Henry's law has a certain decay in time. These additional estimates should be seen as an invitation for deeper investigations of the large time behavior of solutions to our problem, particularly about deriving quantitative convergence rates to the expected steady states. As working strategy,   we first show that the position of the free boundary admits a lower bound that is strictly greater than $a$; the key ingredient is Lemma \ref{lem2}. This gives a partial answer to question (Q2). Next, we obtain useful uniform energy estimates of the solution by multiplying the difference quotient of a solution to the governing equation and combining with integral estimates on quantities defined on the free boundary. We then prove our main theorem by extending local solutions  $(\mbox{P})_{u_0, s_0, b}$ up to global ones addressing this way the question (Q1). We reuse energy estimates as well as integral estimates on fluxes at the free boundary to complement the analysis with a continuous dependence result of the solution of the moving boundary problem with respect to the given data and model parameters.





\section{Notation, assumptions and concept of solution}
\label{na}

In this paper, we use the following basic notations. We denote by $| \cdot |_X$ the norm for a Banach space $X$. The norm and the inner product of a Hilbert space $H$ are
 denoted by $|\cdot |_H$ and $( \cdot, \cdot )_H$, respectively. Particularly, for an open interval $\Omega \subset {\mathbb R}$, we use the standard notation of the usual Hilbert spaces $L^2(\Omega)$, $H^1(\Omega)$ and $H^2(\Omega)$.

Throughout this paper, we assume the following restrictions on the involved parameters and functions:

(A1) $a$, $a_0$, $H$, $k$ and $T$ are positive constants. 

(A2) $b\in W^{1,2}(0, T)\cap L^{\infty}(0, T)$ with $0<b_*\leq b\leq b^*$ on $(0, T)$, where $b_*$ and $b^*$ are positive constants.

(A3) $\beta\in C^1(\mathbb{R)}\cap W^{1, \infty}(\mathbb{R})$ such that $\beta=0$ on $(-\infty, 0]$, and there exists $r_{\beta}>0$ such that $\beta'>0$ on $(0, r_{\beta})$ and $\beta = k_0$ on $[r_{\beta}, +\infty)$, where $k_0$ is a positive constant. 

(A4) $\varphi \in C^1(\mathbb{R})\cap W^{1, \infty}(\mathbb{R})$ such that $\varphi=0$ on $(-\infty, 0]$, and there exists $r_{\varphi}>0$ such that $\varphi' >0$ on $(0, r_{\varphi})$ and $\varphi = c_0$ on $[r_{\varphi}, +\infty)$, where $c_0$ is a positive constant satisfying 
\begin{align}
\label{c0-1}
0<c_0<\mbox{min}\{2\varphi(a), b^*H^{-1}\}. 
\end{align}
Also, we put $c_{\varphi}=\mbox{sup}_{r\in \mathbb{R}}\varphi'(r)$. 

(A5) $a<s_0<r_{\varphi}$, where $r_{\varphi}$ is the same as in (A4), and $u_0\in H^1(a, s_0)$ such that $\varphi(a)\leq u_0\leq b^*H^{-1}$ on $[a, s_0]$.

For $T>0$, let $s$ be a function on $[0, T]$ and $u$ be a function on $Q_s(T):=\{(t, z) | 0\leq t\leq T, a<s(t)\}$.

Next, we define our concept of solution to (P)$_{u_0, s_0, b}$ on $[0,T]$ in the following way:

\begin{defin}
\label{def}
 We call that pair $(s, u)$ a solution to (P)$_{u_0, s_0, b}$ on $[0, T]$ if the following conditions (S1)-(S6) hold:

(S1) $s$, $s_t\in L^{\infty}(0, T)$, $a<s$ on $[0, T]$, $u\in L^{\infty}(Q_s(T))$, $u_t$, $u_{zz}\in L^2(Q_s(T))$ and $t \in [0, T] \to |u_z(t,  \cdot)|_{L^2(a, s(t))}$ is bounded;

(S2) $u_t-ku_{zz}=0$ a.e. on $Q_s(T)$;

(S3) $-ku_z(t, a)=\beta(b(t)-Hu(t, a))$ for a.e. $t\in (0, T)$;

(S4) $-ku_z(t, s(t))=u(t, s(t))s_t(t)$ for a.e. $t\in (0, T)$;

(S5) $s_t(t)=a_0(u(t, s(t))-\varphi(s(t)))$ for a.e. $t\in (0, T)$;

(S6) $s(0)=s_0$ and $u(0, z)=u_0(z)$ for $z \in [a, s_0]$.
\end{defin}

\section{Results}\label{results}

In this part, we report on our results concerning the solvability of the proposed moving boundary problem. We are building upon our previous understanding of the local-in-time existence of weak solutions; see \cite{KA}, and present essentially new results.
The first main result of this paper is concerned with the existence and uniqueness of a time global solution in the sense of Definition \ref{def} to the problem (P)$_{u_0, s_0, b}$.
\begin{thm}
\label{th2}
Let $T>0$. If (A1)--(A5) hold, then (P)$_{u_0, s_0, b}$ has a unique solution $(s, u)$ on $[0, T]$ satisfying 
$\varphi(a)\leq u \leq b^*H^{-1}$ on $Q_s(T)$. 
\end{thm}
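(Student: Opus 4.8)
The plan is to build the global solution by extending the local solution from \cite{KA} step by step, using uniform-in-time a priori estimates (Section 3) together with the pointwise lower bound $s(t)\geq \ell>a$ (Section 4) to rule out blow-up of the fixed-domain transformation. Concretely, let $[0,T^{*})$ be the maximal interval on which a local solution $(s,u)$ exists, and suppose for contradiction that $T^{*}<T$. First I would record from the construction that $(s,u)$ satisfies on every $[0,T']$ with $T'<T^{*}$ the uniform bounds obtained in Section 3 -- boundedness of $s$, $s_{t}$ in $L^{\infty}$, of $u$ in $L^{\infty}(Q_{s})$, of $u_{t}$, $u_{zz}$ in $L^{2}(Q_{s})$, and of $t\mapsto |u_{z}(t,\cdot)|_{L^{2}(a,s(t))}$ -- all with constants depending only on the data in (A1)--(A5) and on $T$, not on $T'$ nor on $T^{*}$. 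These estimates let one pass to the limit $t\uparrow T^{*}$ and define $s(T^{*})=\lim_{t\uparrow T^{*}}s(t)$ and $u(T^{*},\cdot)\in H^{1}$ as limits in the appropriate (weak/strong) topologies, so that $(s(T^{*}),u(T^{*}))$ is an admissible initial datum for the problem $(\mathrm{P})_{u(T^{*}),s(T^{*}),h(\cdot+T^{*})}$.

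Second, and this is the crux, I would invoke the result of Section 4: under the control on initial data built into (A4)--(A5) (in particular $\varphi(a)<u_{0}\leq\varphi(s_{0})$ and $c_{0}\leq 2\varphi(a)$), the free boundary stays away from the degenerate position, i.e. there is $\ell>a$ with $s(t)\geq\ell$ for all $t$ in the interval of existence. Hence $s(T^{*})\geq\ell>a$, which means the limit datum $s(T^{*})$ still encloses a genuine interval $[a,s(T^{*})]$ on which the Landau transformation $y=(z-a)/(s(t)-a)$ is non-degenerate. One must also check that $u(T^{*},\cdot)$ inherits the bounds $\varphi(a)\leq u(T^{*},\cdot)\leq |h|_{L^{\infty}(0,T)}H^{-1}$ needed to re-enter the hypotheses of the local existence theorem; these follow from the maximum-principle bounds $\varphi(a)\leq u\leq|h|_{L^{\infty}}H^{-1}$ on $Q_{s}(T^{*})$ (themselves a consequence of (A2)--(A5), proved alongside the energy estimates), and from the fact that $s(T^{*})<r_{\varphi}$ because $s_{t}\leq a_{0}(|h|_{L^{\infty}}H^{-1}-\varphi(a))$ together with $c_{0}\leq|h|_{L^{\infty}}H^{-1}$ keeps $s$ below $r_{\varphi}$ -- or, more robustly, one re-derives whatever upper bound on $s$ is uniform in time. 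Applying the local solvability result of \cite{KA} to $(\mathrm{P})_{u(T^{*}),s(T^{*}),h(\cdot+T^{*})}$ produces a solution on some $[0,\tau]$, $\tau>0$, where $\tau$ can be chosen depending only on the uniform data bounds and on $\ell$ -- hence bounded below independently of $T^{*}$. Gluing this to $(s,u)$ at $t=T^{*}$ contradicts maximality, so $T^{*}\geq T$, and the solution exists on all of $[0,T]$. The asserted two-sided bound $\varphi(a)\leq u\leq|h|_{L^{\infty}(0,T)}H^{-1}$ on $Q_{s}(T)$ is then just the global version of the maximum-principle estimate.

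Third, for uniqueness I would argue on $[0,T]$ directly (or, again, by a continuation-of-uniqueness argument): given two solutions $(s_{1},u_{1})$ and $(s_{2},u_{2})$ with the same data, transform both to the fixed domain $(0,1)$, subtract the equations, and test the difference of the parabolic equations with the difference of the solutions while simultaneously estimating $|s_{1}(t)-s_{2}(t)|$ via (S5) and the Lipschitz character of $\varphi$ (assumption (A4), $\varphi\in W^{1,\infty}$) and of $\beta$ (assumption (A3)). The boundary terms at $y=0$ are controlled using the Lipschitz bound on $\beta$, and the terms at $y=1$ using (S4)--(S5) and the uniform $L^{\infty}$ and $H^{1}$ bounds on $u_{i}$ together with the one-dimensional trace/Sobolev inequality $|v(t,\cdot)|_{L^{\infty}(a,s_{i}(t))}\leq C|v(t,\cdot)|_{H^{1}}$. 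This yields a Gronwall inequality of the form $\frac{d}{dt}\bigl(|u_{1}-u_{2}|^{2}_{L^{2}}+|s_{1}-s_{2}|^{2}\bigr)\leq C\bigl(|u_{1}-u_{2}|^{2}_{L^{2}}+|s_{1}-s_{2}|^{2}\bigr)$, forcing the two solutions to coincide since the data agree.

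I expect the main obstacle to be Step 2 -- more precisely, ensuring that the limit pair $(s(T^{*}),u(T^{*}))$ genuinely satisfies \emph{all} of (A1)--(A5) for the restarted problem, with constants/thresholds that do not deteriorate as one iterates the extension. The quantitative lower bound $s\geq\ell>a$ from Section 4 is what makes this work, so the whole argument hinges on that pointwise estimate being uniform in the interval of existence; the energy estimates of Section 3 and the trace inequalities are comparatively routine one-dimensional bookkeeping, and uniqueness is a standard Gronwall argument once the a priori bounds are in hand.
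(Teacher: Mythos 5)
Your overall strategy coincides with the paper's: take the maximal existence time, use the uniform energy estimates of Section 3 and the pointwise lower bound $s(t)\ge\varphi^{-1}(\varphi(a)+\delta)>a$ of Section 4 to pass to the limit at that time, restart from the limit data via the local solvability of \cite{KA}, and contradict maximality; the stated two-sided bound on $u$ is the maximum-principle estimate carried along. The one step that does not survive scrutiny is your justification for re-entering the hypotheses of the local theorem. You try to recover the full assumption (A5) at the restart, in particular $s(T^*)<r_\varphi$, arguing that $s_t\le a_0(|h|_{L^\infty}H^{-1}-\varphi(a))$ together with $c_0\le|h|_{L^\infty}H^{-1}$ ``keeps $s$ below $r_\varphi$''. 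This is false in general: the only upper control available is linear growth, $s(t)\le a_0|h|_{L^\infty(0,T)}H^{-1}\,t+s_0$ (this is exactly how the paper chooses $\tilde L$ in Section 5), and nothing prevents $s$ from crossing $r_\varphi$; likewise the limit datum need not satisfy $u(T^*)\le\varphi(s(T^*))$, nor the strict inequality $\varphi(a)<u(T^*)$, so (A5) simply cannot be restored at the restart.

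The paper's resolution, which your hedge (``re-derive whatever upper bound on $s$ is uniform in time'') gestures at but does not supply, is that the local result of \cite{KA} is invoked under the weaker hypothesis (A5)': $s_0>a$, $u_0\in H^1(a,s_0)$ and $\varphi(a)\le u_0\le|h|_{L^\infty(0,T)}H^{-1}$, together with an arbitrarily prescribed ceiling $L>a$ for $s$. These are precisely the properties the limit pair $(s_{\tilde T},u_{\tilde T})$ inherits from Lemma \ref{lem3-1} (uniform $H^1$ bound), Lemma \ref{lem4} ($s_{\tilde T}\ge\varphi^{-1}(\varphi(a)+\delta)>a$) and the $L^\infty$ bounds, with the ceiling updated to $a_0|h|_{L^\infty(0,T)}H^{-1}+s_{\tilde T}$. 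Two smaller remarks: you do not need the restart time $\tau$ to be bounded below independently of $T^*$ --- producing a solution strictly beyond $\tilde T$ already contradicts the definition of $\tilde T$ as a supremum; and uniqueness in the paper is taken from the local uniqueness of \cite{KA} rather than proved by a separate Gronwall argument, though your sketch is a reasonable substitute.
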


To prove Theorem \ref{th2}, we transform (P)$_{u_0, s_0, b}$, initially posed in a non-cylindrical domain, to a cylindrical domain. Let $T>0$. For given $s\in W^{1,2}(0, T)$ with $a<s(t)$ on $[0, T]$, we introduce the following new function obtained by the indicated change of variables:
\begin{align}
\label{p2-0}
& \tilde{u}(t, y)=u(t, (1-y)a+ys(t)) \mbox{ for } (t, y)\in Q(T):=(0, T)\times (0, 1). 
\end{align}
By using the function $\tilde{u}$, we consider now the following problem $(\mbox{P})_{\tilde{u}_0, s_0, b}$:
\begin{align}
& \tilde{u}_t(t, y)-\frac{k}{(s(t)-a)^2}\tilde{u}_{yy}(t, y)=\frac{ys_t(t)}{s(t)-a}\tilde{u}_y(t, y) \mbox{ for }(t, y)\in Q(T), \label{p2-1}\\
& -\frac{k}{s(t)-a}\tilde{u}_y(t, 0)=\beta(b(t)-H\tilde{u}(t, 0)) \mbox{ for }t\in(0, T), \label{p2-2}\\
& -\frac{k}{s(t)-a}\tilde{u}_y(t, 1)=\tilde{u}(t, 1)s_t(t) \mbox{ for }t\in (0, T),  \label{p2-3}\\
& s_t(t)=a_0(\tilde{u}(t, 1)-\varphi(s(t))) \mbox{ for }t\in (0, T), \label{p2-4}\\
& s(0)=s_0, \label{p2-5}\\
& \tilde{u}(0, y)=u_0((1-y)a+y s(0))(:=\tilde{u}_0(y)) \mbox{ for }y \in [0, 1]. \label{p2-6}
\end{align}

The definition of a solution of $(\mbox{P})_{\tilde{u}_0, s_0, b}$ is as follows:
\begin{defin}
\label{def1}
For $T>0$, let $s$ be functions on $[0, T]$ and $\tilde{u}$ be a function on $Q(T)$, respectively. We call that a pair $(s, \tilde{u})$ is a solution of (P)$_{\tilde{u}_0, s_0, b}$ on $[0, T]$ if the conditions (S'1)-(S'2) hold:

(S'1) $s$, $s_t\in L^{\infty}(0, T)$, $a<s$ on $[0, T]$, $\tilde{u}\in W^{1,2}(Q(T))\cap L^{\infty}(0, T; H^1(0, 1))\cap L^2(0, T;H^2(0, 1))\cap L^{\infty}(Q(T))$.

(S'2) (\ref{p2-1})--(\ref{p2-6}) hold.
\end{defin}

Theorem \ref{th2} is a direct consequence of the following result concerning  the existence and uniqueness of a time global solution of problem (P)$_{\tilde{u}_0, s_0, b}$ transformed to a cylindrical domain and using the change of variable 
\begin{align}
\label{p2-7}
 u(t, z)=\tilde{u}\left(t, \frac{z-a}{s(t)-a}\right) \mbox{ for } (t, z)\in [0, T]\times [a, s(t)].
\end{align}
\begin{thm}
\label{th3}
Let $T>0$. If (A1)--(A5) hold, (P)$_{\tilde{u}_0, s_0, b}$ has a unique solution $(s, \tilde{u})$ on $[0, T]$ 
satisfying $\varphi(a)\leq \tilde{u} \leq b^*H^{-1}$ on $Q(T)$. 
\end{thm}

The second main result is concerned with the continuous dependence estimate between the solution $(s, \tilde{u})$ of (P)$_{\tilde{u}_0, s_0, b}$ and the given data $\tilde{u}_0$, $s_0$ and $b$:
\begin{thm}
\label{b-4}
For $T>0$ and $i=1$, 2, let $b_i\in W^{1, 2}(0, T)$ be a function satisfying (A2), $a<s_{0i}<r_{\varphi}$ and $u_{0i}\in H^1(a, s_{0i})$ with $\varphi(a)\leq u_{0i}\leq b^*H^{-1}$ on $[a, s_{0i}]$ 
and let $(s_i, u_i)$ a solution of (P)$_{u_{0i}, s_{0i}, b_i}$ on $[0, T]$ satisfying $\varphi(a)\leq u_i \leq b^*H^{-1}$ on $Q_{s_i}(T)$. 
Then, there exists a positive constant $M_T$ depending on $T$ such that 
\begin{align}
\label{eqb-22}
& |s_1(t)-s_2(t)|^2+|\tilde{u}_1(t)-\tilde{u}_2(t)|^2_{L^2(0, 1)} + \int_0^t |\tilde{u}_{1y}(\tau)-\tilde{u}_{2y}(\tau)|^2_{L^2(0, 1)} d\tau \nonumber \\
\leq & M_T \left( |s_{01}-s_{02}|^2+|\tilde{u}_{01}-\tilde{u}_{02}|^2_{L^2(0, 1)} + \int_0^t |b_1(\tau)-b_2(\tau)|^2 d\tau \right) \mbox{ for }t\in [0, T],
\end{align}
where $\tilde{u}_i$ and $\tilde{u}_{0i}$ are given by the similar way to  (\ref{p2-0}) and (\ref{p2-6}), respectively.
\end{thm}

In Section 5,  we discuss the large time behavior of a solution to (P)$_{u_0, s_0, b}$ as $t\to \infty$. In fact, we are able to prove the following dichotomy: It either  holds 
$$ (i)  \lim_{t \to \infty} \int_0^t \beta(b(\tau)-Hu(\tau, a))d\tau=\infty \mbox{ and } \lim_{t\to \infty}s(t)=\infty,$$
or 
$$(ii) \lim_{t \to \infty} \int_0^t \beta(b(\tau)-Hu(\tau, a))d\tau \leq  \frac{b^*}{H}(r_{\varphi}-r_*) \mbox{ and } \limsup_{t \to \infty}s(t)\leq a+ \frac{1}{\varphi(a)} \frac{b^*}{H}(s_0-a+r_{\varphi}-r_*). $$
In other words, this result emphasizes that the elongation-by-swelling of the one-dimensional pore stays finite as $t\to\infty$ only if the production term by Henry's law decays in time.  For a precise statement of the result, the reader is referred to Theorem \ref{b-2} and  also to Section \ref{bb} for the corresponding proof. The proofs of Theorems \ref{th3} and \ref{b-4} are given in Section 3 and 4.

\section{Uniform estimates}

In this section, we prove a number of uniform estimates for the solution. Throughout this section, we always assume (A1)--(A5) (without stating it each time when used).  
First, we show a uniform lower estimate of the free boundary $s$; see (\ref{KEY}). This is a key ingredient ensuring the global existence of solutions to our problem.  
\begin{lem}
\label{lem2}
Let $(s, u)$ be a solution of (P)$_{u_0, s_0, b}$ on $[0, T]$ satisfying $\varphi(a)\leq u \leq b^*H^{-1}$ on $[a, s(t)]$ for $t\in [0, T]$. Then, there exists a positive constant $s_*$ such that $a<s_* <s_0$ and 
\begin{align}
\label{KEY}
s(t) \geq s_* \mbox{ for }t\in [0, T].
\end{align}
\end{lem}
\begin{proof}
First, by integrating (\ref{p1-1}) over $[a, s(t)]$ for $t\in [0, T]$, we have that 
\begin{align}
\label{eq3-1}
& \frac{d}{dt}\int_a^{s(t)} u(t) dz - s_t(t)u(t, s(t))-ku_z(t, s(t))+ku_z(t, a)=0 \mbox{ for a.e. }t\in [0, T]. 
\end{align}
Using the boundary conditions (\ref{p1-2}) and (\ref{p1-3}), we observe that 
\begin{align}
\label{eq3-2}
& \frac{d}{dt}\int_a^{s(t)}u(t) dz - s_t(t)u(t, s(t)) \nonumber \\
& + u(t, s(t))s_t(t) - \beta(b(t)-Hu(t, a)) =0 \mbox{ for a.e. }t\in [0, T]. 
\end{align}
The second and third terms are canceled out, and from the positivity of $\beta$ in (A3), we infer from (\ref{eq3-2}) that 
\begin{align}
\label{eq3-3}
\frac{d}{dt}\int_a^{s(t)}u(t) dz \geq 0 \mbox{ for a.e. }t\in [0, T]. 
\end{align}
Therefore, by integrating (\ref{eq3-3}) over $[0, t]$ for $t\in [0, T]$ it follows that 
\begin{align}
\label{eq3-4}
\int_a^{s(t)} u(t) dz \geq \int_a^{s_0} u(0) dz.
\end{align}
Since  $\varphi(a)\leq u \leq b^*H^{-1}$ on $[a, s(t)]$ for $t\in [0, T]$ we obtain from (\ref{eq3-4}) that 
\begin{align}
\label{eq3-5}
s(t) \geq a + \frac{H}{b^*}\varphi(a)(s_0-a).
\end{align}
The constant in the right hand side of (\ref{eq3-5}) is the desired one. In fact, by $a<s_0$ it is easy to see that $a<a + \frac{H}{b^*}\varphi(a)(s_0-a)$. 
Also, by $\varphi(a) < c_0<b^*H^{-1}$ in (\ref{c0-1}) we see that 
\begin{align}
a + \frac{H}{b^*} \varphi(a)(s_0-a) -s_0 &=(s_0-a) \left(\frac{H}{b^*} \varphi(a)-1\right)<0. \nonumber 
\end{align}
Thus, Lemma \ref{lem2} is proved. 
\end{proof}

Next, we prove uniform estimates of the solution $u$ of (P)$_{u_0, s_0, b}$. 
\begin{lem}
\label{lem3-1}
Let $(s, u)$ be a solution of (P)$_{u_0, s_0, b}$ on $[0, T]$ satisfying $\varphi(a)\leq u \leq b^*H^{-1}$ on $[a, s(t)]$ for $t\in [0, T]$. Then, there exists a positive constant $C(T)$ which depends on $T$ such that 
\begin{align}
\label{eq4-2}
& \int_{0}^t|u_t(\tau)|^2_{L^2(a, s(\tau))}d\tau + |u_z(t)|^2_{L^2(a, s(t))} \leq C(T) \mbox{ for all }t\in (0, T). 
\end{align}
\end{lem}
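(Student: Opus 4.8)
My approach is the standard energy method for parabolic equations, adapted to the moving-domain setting: test the PDE \eqref{p1-1} with $u_t$, integrate over $(a,s(t))$, and carefully account for the boundary terms coming from the two flux conditions \eqref{p1-2}--\eqref{p1-3}, as well as the extra term arising when differentiating $\frac{1}{2}\int_a^{s(t)}|u_z|^2\,dz$ in time because the upper limit of integration moves. Since the a priori bound $\varphi(a)\le u\le |h|_{L^\infty(0,T)}H^{-1}$ is assumed, I already have $|u|_{L^\infty(Q_s(T))}$ under control, and I will also want a lower bound $a<s$ and an upper bound on $s(t)$ and $|s_t|$; the latter two follow from \eqref{p1-4}, the $L^\infty$ bound on $u$, the boundedness of $\varphi$, and Gronwall, so I treat $s,s_t\in L^\infty(0,T)$ as available (they are part of (S1) anyway).

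First I multiply \eqref{p1-1} by $u_t$ and integrate in $z$ over $(a,s(t))$, giving
\[
\int_a^{s(t)}|u_t|^2\,dz + k\int_a^{s(t)}u_{zz}\,(-u_t)\,dz = 0 .
\]
Integrating by parts in the second term produces $k\int_a^{s(t)}u_z u_{zt}\,dz$ plus the boundary contributions $-k u_z(t,s(t))u_t(t,s(t)) + k u_z(t,a)u_t(t,a)$. Next I use the Reynolds/Leibniz transport identity
\[
\frac{d}{dt}\,\frac12\int_a^{s(t)}|u_z(t,z)|^2\,dz = \int_a^{s(t)}u_z u_{zt}\,dz + \frac12\,s_t(t)\,|u_z(t,s(t))|^2
\]
to rewrite $k\int_a^{s(t)}u_z u_{zt}\,dz$ as $k\frac{d}{dt}\big(\frac12|u_z(t)|_{L^2(a,s(t))}^2\big) - \frac{k}{2}s_t|u_z(t,s(t))|^2$. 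Substituting the two boundary conditions: at $z=a$, $ku_z(t,a)u_t(t,a) = -\beta(h-Hu(t,a))\,u_t(t,a)$, which I handle by writing $\beta(h-Hu(t,a))u_t(t,a) = \frac{d}{dt}\big(\text{something}\big) + (\text{lower order})$ using that $\frac{d}{dt}u(t,a)$ appears — more precisely I introduce the primitive of $r\mapsto \beta(h(t)-Hr)$, differentiate it in time, and absorb the resulting $h_t$-term using (A2) and Young's inequality; at $z=s(t)$, $ku_z(t,s(t)) = -u(t,s(t))s_t(t)$ by \eqref{p1-3}, so that term becomes controllable by $|u|_{L^\infty}|s_t|_{L^\infty}|u_t(t,s(t))|$, and the half-integer boundary term $\frac{k}{2}s_t|u_z(t,s(t))|^2$ is likewise rewritten via $u_z(t,s(t)) = -\frac{1}{k}u(t,s(t))s_t(t)$ into a quantity bounded by $C|s_t|^3$, hence in $L^1(0,T)$.

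The genuinely delicate point is controlling the boundary traces of $u_t$ — namely $u_t(t,a)$ and $u_t(t,s(t))$ — which are not directly bounded by the interior $L^2$ norm of $u_t$. For $u_t(t,a)$ I will exploit the structure of \eqref{p1-2}: differentiating it in time expresses $u_{zt}(t,a)$ in terms of $u_t(t,a)$ and $h_t$, and then a trace/interpolation inequality in one space dimension, $|v(a)|^2 \le \varepsilon |v_z|_{L^2(a,s(t))}^2 + C_\varepsilon|v|_{L^2(a,s(t))}^2$, lets me absorb the dangerous term into the good terms; for $u_t(t,s(t))$ I combine \eqref{p1-3}--\eqref{p1-4}, which give $u_t(t,s(t)) = \frac{1}{a_0}s_{tt} + \varphi'(s)s_t$ after differentiating \eqref{p1-4}, so bounding it reduces to bounding $s_{tt}$ in $L^2(0,T)$ — and that in turn comes from differentiating \eqref{p1-4} and \eqref{p1-3} and feeding back the estimate, or more cleanly by absorbing $s_{tt}$-contributions into a time derivative and using the already-established $L^\infty$ bounds. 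After all substitutions I arrive at a differential inequality of the form
\[
\frac{d}{dt}\Big(\tfrac{k}{2}|u_z(t)|_{L^2(a,s(t))}^2 + (\text{nonnegative boundary primitives})\Big) + \tfrac12\int_a^{s(t)}|u_t|^2\,dz \le C(T)\Big(1 + |u_z(t)|_{L^2(a,s(t))}^2\Big) + g(t),
\]
with $g\in L^1(0,T)$, and integrating in time from $0$ to $t$ together with Gronwall's lemma — using $u_0\in H^1(a,s_0)$ to bound the initial energy — yields \eqref{eq4-2}. The main obstacle, as flagged, is the rigorous treatment of the moving-boundary trace term $u_t(t,s(t))$: one must be careful that the chain of identities relating it to $s_{tt}$ does not introduce a term that cannot be absorbed, and this is where I expect to lean on the computation in the proof of Lemma 3.2 of \cite{AM} that the authors cite.
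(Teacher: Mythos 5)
There is a genuine gap, and it sits exactly at the point you flag as delicate: the boundary term $u(t,s(t))s_t(t)\,u_t(t,s(t))$ coming from \eqref{p1-3}. Your proposed resolution is to differentiate \eqref{p1-4} in time and reduce the problem to bounding $s_{tt}$ in $L^2(0,T)$. This fails for two reasons. First, the solution class (S1) only gives $s_t\in L^\infty(0,T)$; $s_{tt}$ is not known to exist, and any estimate of it obtained by differentiating \eqref{p1-4} requires $\frac{d}{dt}u(t,s(t))=u_t(t,s(t))+u_z(t,s(t))s_t(t)$, i.e.\ precisely the trace of $u_t$ at the free boundary that you are trying to control --- the argument is circular, and a trace or interpolation inequality cannot rescue it because no bound on $u_{tz}$ (equivalently $u_t\in L^2(0,T;H^1)$) is available. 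Second, the identity you quote, $u_t(t,s(t))=\frac{1}{a_0}s_{tt}+\varphi'(s)s_t$, omits the convection term $-u_z(t,s(t))s_t(t)$ from the chain rule (that particular omission is harmless, since \eqref{p1-3} makes $u_z(t,s(t))=-\frac1k u(t,s(t))s_t(t)$ bounded, but it signals that the differentiated-boundary-condition route is not being tracked carefully).

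The paper's actual mechanism avoids any trace of $u_t$ and any appeal to $s_{tt}$: observe that by \eqref{p1-4} the dangerous coefficient is $u(t,s(t))s_t(t)=a_0u(t,s(t))\bigl(u(t,s(t))-\varphi(s(t))\bigr)=\partial_r\Phi(s(t),r)\big|_{r=u(t,s(t))}$ with $\Phi(s,r)=a_0\bigl(\tfrac{r^3}{3}-\varphi(s)\tfrac{r^2}{2}\bigr)$. Since $u\geq\varphi(a)$ and (A4) guarantees $c_0\leq 2\varphi(a)$, $\Phi(s,\cdot)$ is convex on the range of $u$, so the product of this coefficient with a backward time difference quotient of the boundary value is bounded below by the difference quotient of $\Phi(s(t),u(t,s(t)))$ itself; after integration in time this telescopes to endpoint values of $\Phi$, which are bounded by the assumed $L^\infty$ bounds, plus a correction $\frac{a_0}{2}\int\varphi'(s)s_t|u(t,s(t))|^2$ which is bounded, plus a term handled by substituting \eqref{p1-3} (giving $|s_t|^3$-type quantities, exactly your $C|s_t|^3$ observation). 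The same convexity device with $\hat\beta(h,r)=\int_0^r-\beta(h-H\tau)\,d\tau$ handles the $z=a$ term --- this part of your plan is essentially the paper's. Note also that the paper runs the whole computation with difference quotients $v_h=(\tilde u(t)-\tilde u(t-h))/h$ in the fixed-domain variables rather than testing with $u_t$ and differentiating boundary conditions, precisely so that no time-differentiability of boundary traces or of $s_t$ is ever invoked. Without this convexity/primitive structure for the free-boundary term, your energy identity cannot be closed within the stated regularity class.
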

\begin{proof}
Let $(s, u)$ be a solution of (P)$_{u_0, s_0, b}$ on $[0, T]$. Then, by the change of variables (\ref{p2-0}) we see that $(s, \tilde{u}$) is a solution of (P)$_{\tilde{u}_0, s_0, b}$ on $[0, T]$ in the sense of Definition \ref{def1}. Now, we put $v_h(t)=\frac{\tilde{u}(t)-\tilde{u}(t-h)}{h}$ for $h>0$ and $u(t)=u(0)=u_0$ for $t<0$. 
By (\ref{p2-1}),  we have that 
\begin{align}
\label{eq4-8}
& \int_0^1 \tilde{u}_t(t)(s(t)-a)v_h(t) dy -k\int_0^1\frac{1}{s(t)-a}\tilde{u}_{yy}(t)v_h(t) dy \nonumber \\
& =\int_0^1 \frac{ys_t(t)\tilde{u}_y(t)}{s(t)-a} (s(t)-a) v_h(t) dy \quad \mbox{ for }t\in [0, T]. 
\end{align}
Then, using the boundary conditions (\ref{p2-2}) and (\ref{p2-3}), we observe that 
\begin{align}
\label{eq4-9}
& -\int_0^1\frac{k}{s(t)-a}\tilde{u}_{yy}(t) \frac{\tilde{u}(t)-\tilde{u}(t-h)}{h} dy \nonumber \\
= & \tilde{u}(t, 1)(a_0(\tilde{u}(t, 1)-\varphi(s(t))))v_h(t, 1) \nonumber \\
& -\beta(b(t)-H\tilde{u}(t, 0)) v_h(t, 0) + \int_0^1 \frac{k}{s(t)-a}\tilde{u}_y(t)v_{hy}(t) dy, 
\end{align}
and 
\begin{align}
\label{eq4-10}
& \int_0^1 \frac{k}{s(t)-a}\tilde{u}_y(t)v_{hy}(t) dy \nonumber \\
\geq & \frac{k}{2h}\int_0^1 \frac{1}{s(t)-a}(|\tilde{u}_y(t)|^2-|\tilde{u}_y(t-h)|^2)dy \nonumber \\
= & \frac{k}{2h}\biggl[ \int_a^{s(t)}|u_z(t)|^2 dz - \int_a^{s(t-h)}\frac{s(t-h)-a}{s(t)-a}|u_z(t-h)|^2 dz\biggr] \nonumber \\
= & \frac{k}{2h}\biggl[ \int_a^{s(t)}|u_z(t)|^2 dz - \int_a^{s(t-h)}|u_z(t-h)|^2 dz + \int_a^{s(t-h)}\frac{s(t)-s(t-h)}{s(t)-a}|u_z(t-h)|^2 dz\biggr].
\end{align}
Here, we introduce the function $\Phi(s(t), r)=a_0\left(\frac{r^3}{3}-\varphi(s(t))\frac{r^2}{2}\right)$ for $t\in[0, T]$ and $r\in \mathbb{R}$. 
Then, $\frac{\partial}{\partial r}\Phi(s(t), r)=a_0(r^2-\varphi(s(t))r)$ and $\frac{\partial^2}{\partial r^2}\Phi(s(t), r)=a_0(2r-\varphi(s(t)))$ for $t\in [0, T]$. 
In particular, by (A4), it holds that for $r\geq \varphi(a)$, 
\begin{align}
\label{eq4-11}
\frac{\partial^2}{\partial r^2}\Phi(s(t), r) &=a_0(2r-\varphi(s(t))) \nonumber \\
                                   &\geq a_0(2\varphi(a)-\varphi(s(t))) \nonumber \\
                                   & >0 \quad \mbox{ for }t\in [0, T].
\end{align}
Since $\tilde{u}(t, 1)\geq \varphi(a)$ for $t\in [0, T]$, (\ref{eq4-11}) implies, for $t\in [0, T]$,  that $\Phi(s(t), \tilde{u}(t, 1))$ is convex with respect to the second component. From this, we see that the following inequality holds.
\begin{align}
\label{eq4-12}
\tilde{u}(t, 1)(a_0(\tilde{u}(t, 1)-\varphi(s(t)))v_h(t, 1)\geq \frac{\Phi(s(t), \tilde{u}(t, 1))-\Phi(s(t), \tilde{u}(t-h, 1))}{h} 
\mbox{ for }t\in [0, T]. 
\end{align}
Next, we also introduce $\hat{\beta}(b(t), r)=\int_0^r -\beta(b(t)-H\tau)d\tau$ for $t\in [0, T]$ and $r\in \mathbb{R}$. Then, by (A3) it is easy to see that $\frac{\partial^2}{\partial r^2}\hat{\beta}(b(t), r)=\beta'(b(t)-Hr)H \geq 0$. Hence, for $t\in [0, T]$, $\hat{\beta}(b(t), \tilde{u}(t, 0))$ is convex with respect to the second component so that we can see that the following inequality holds. 
\begin{align}
\label{eq4-13}
-\beta(b(t)-H\tilde{u}(t, 0))v_h(t, 0) \geq \frac{\hat{\beta}(b(t), \tilde{u}(t, 0))-\hat{\beta}(b(t), \tilde{u}(t-h, 0))}{h} 
\mbox{ for }t\in [0, T].
\end{align}
Combining (\ref{eq4-9})-(\ref{eq4-13}) as well as  (\ref{eq4-8}) yields the inequality  
\begin{align}
\label{eq4-14}
& \int_0^1\tilde{u}_t(t) (s(t)-a)v_h(t)dy \nonumber \\
& + \frac{k}{2h}\biggl[ \int_a^{s(t)}|u_z(t)|^2 dz - \int_a^{s(t-h)}|u_z(t-h)|^2 dz + \int_a^{s(t-h)}\frac{s(t)-s(t-h)}{s(t)-a}|u_z(t-h)|^2 dz\biggr] \nonumber \\
& + \frac{\Phi(s(t), \tilde{u}(t, 1))-\Phi(s(t), \tilde{u}(t-h, 1))}{h} + \frac{\hat{\beta}(b(t), \tilde{u}(t, 0))-\hat{\beta}(b(t), \tilde{u}(t-h, 0))}{h} \nonumber \\
\leq & \int_0^1 ys_t(t)\tilde{u}_y(t) v_h(t) dy \mbox{ for }t\in [0, T].
\end{align}
Now, we integrate (\ref{eq4-14}) over $[0, t_1]$ for $t_1\in (0 ,T]$ and take the limit as $h\to 0$. Then, by the change of the variables the first term of the left hand side of (\ref{eq4-14}) is as follows:
\begin{align}
\label{eq4-15}
& \lim_{h\to 0}\int_0^{t_1} \int_0^1 \tilde{u}_t(t) (s(t)-a)v_h(t) dy dt = \int_0^{t_1}\int_0^1 |\tilde{u}_t(t)|^2(s(t)-a) dy dt \nonumber \\
=& \int_0^{t_1}\int_a^{s(t)} \biggl(|u_t(t)|^2 +2u_t(t)u_z(t)\frac{z-a}{s(t)-a}s_t(t) + \left(u_z(t)\frac{z-a}{s(t)-a}s_t(t)\right)^2 \biggr) dzdt.
\end{align}
Argueing similarly as in in Lemma 3.4 of \cite{KA}, we can prove that the function $t\to \int_a^{s(t)}|u_z(t)|^2dz$ is absolutely continuous on $[0, T]$. Then, the second and third terms of  the left-hand side of (\ref{eq4-14}) can be dealt with as 
\begin{align}
\label{eq4-16}
& \lim_{h\to0} \frac{k}{2h}\int_0^{t_1}\biggl(\int_a^{s(t)}|u_z(t)|^2dz-\int_a^{s(t-h)}|u_z(t-h)|^2dz\biggr)d\tau \nonumber \\
= & \frac{k}{2}\biggl(\int_a^{s(t_1)}|u_z(t_1)|^2dz - \int_a^{s_0}|u_z(0)|^2dz\biggr), 
\end{align}
and 
\begin{align}
\label{eq4-17}
&\lim_{h\to 0}\frac{k}{2}  \int_0^{t_1}\int_a^{s(t-h)} \frac{1}{s(t)-a} \frac{s(t)-s(t-h)}{h}|u_z(t-h)|^2 dzdt \nonumber \\
=& \frac{k}{2} \int_0^{t_1}\int_a^{s(t)} \frac{s_t(t)}{s(t)-a}|u_z(t)|^2dzdt.
\end{align}
Moreover, since $\tilde{u}$ is continuous on $\overline{Q(T)}$ we have that 
\begin{align}
\label{eq4-18}
& \lim_{h\to 0}\frac{1}{h}\int_0^{t_1}\biggl(\Phi(s(t), \tilde{u}(t, 1))-\Phi(s(t), \tilde{u}(t-h, 1))\biggr)dt \nonumber \\
& = \lim_{h\to 0} \biggl(\frac{1}{h}\int_{t_1-h}^{t_1} \Phi(s(t), \tilde{u}(t, 1))dt\biggr) -\Phi(s(0), \tilde{u}_0(1))
+\frac{a_0}{2}\int_0^{t_1}\varphi'(s(t))s_t(t)|\tilde{u}(t, 1)|^2 dt \nonumber \\
& = \Phi(s(t_1), \tilde{u}(t_1, 1))-\Phi(s(0), \tilde{u}_0(1))+\frac{a_0}{2}\int_0^{t_1}|\tilde{u}(t, 1)|^2\varphi'(s(t))s_t(t) dt, 
\end{align}
and similarly to the derivation of (\ref{eq4-18}) 
\begin{align}
\label{eq4-19}
& \lim_{h\to 0}\frac{1}{h}\int_0^{t_1}\hat{\beta}(b(t), \tilde{u}(t, 0))-\hat{\beta}(b(t), \tilde{u}(t-h, 0)) dt \nonumber \\ 
= & \hat{\beta}(b(t_1), \tilde{u}(t_1, 0)) - \hat{\beta}(b(0), \tilde{u}_0(0)) \nonumber \\ 
& + \lim_{h\to 0}\biggl(-\frac{1}{h}\int_0^{t_1}\biggl[\hat{\beta}(b(t), \tilde{u}(t-h, 0))-\hat{\beta}(b(t-h), \tilde{u}(t-h, 0))\biggr]dt\biggr). 
\end{align}
For the last term of the right hand side of (\ref{eq4-19}) we observe that 
\begin{align}
\label{eq4-20}
& \lim_{h\to 0}\biggl(-\frac{1}{h}\int_0^{t_1}\biggl[\hat{\beta}(b(t), \tilde{u}(t-h, 0))-\hat{\beta}(b(t-h), \tilde{u}(t-h, 0))\biggr]dt\biggr) \nonumber \\
\geq & \lim_{h\to 0}\biggl(-\frac{1}{h}\int_0^{t_1} |\beta'|_{L^{\infty}(\mathbb{R})} |b(t)-b(t-h)||\tilde{u}(t-h, 0)|dt \biggr) \nonumber \\
\geq & \lim_{h\to 0}\biggl(-\frac{|\beta'|_{L^{\infty}(\mathbb{R})}}{h}\int_0^{t_1}\biggl(\int_{t-h}^t |b_t(\tau)|d\tau\biggr)|\tilde{u}(t-h, 0)|dt \biggr) \nonumber \\
\geq & -|\beta'|_{L^{\infty}(\mathbb{R})}\frac{b^*}{H} \int_0^{t_1}|b_t(t)| dt.
\end{align}
From the estimates (\ref{eq4-15})-(\ref{eq4-20}), we obtain from (\ref{eq4-14}) that 
\begin{align}
\label{eq4-21}
& \int_0^{t_1}\int_a^{s(t)} \biggl(|u_t(t)|^2 +2u_t(t)u_z(t)\frac{z-a}{s(t)-a}s_t(t) + \left(u_z(t)\frac{z-a}{s(t)-a}s_t(t)\right)^2 \biggr) dzdt  \nonumber \\
& + \frac{k}{2}\int_a^{s(t_1)}|u_z(t_1)|^2dz -\frac{k}{2}\int_a^{s_0}|u_z(0)|^2dz + \frac{k}{2}\int_0^{t_1}\int_a^{s(t)} \frac{s_t(t)}{s(t)-a}|u_z(t)|^2dzdt \nonumber \\
& + \Phi(s(t_1), \tilde{u}(t_1, 1)) -  \Phi(s(0), \tilde{u}_0(1)) + \frac{a_0}{2}\int_0^{t_1}|\tilde{u}(t, 1)|^2\varphi'(s(t))s_t(t) dt \nonumber \\
& + \hat{\beta}(b(t_1), \tilde{u}(t_1, 0))-\hat{\beta}(b(0), \tilde{u}_0(0)) - |\beta'|_{L^{\infty}(\mathbb{R})}\frac{b^*}{H}\int_0^{t_1}|b_t(t)|dt  \nonumber \\
\leq & \int_0^{t_1} \int_a^{s(t)} \biggl (u_t(t)u_z(t)\frac{z-a}{s(t)-a}s_t(t) + \left(u_z(t)\frac{z-a}{s(t)-a}s_t(t)\right)^2 \biggr) dzdt \ \mbox{ for }t_1\in [0, T].
\end{align}
Therefore, we have 
\begin{align}
\label{eq4-21}
& \int_0^{t_1}\int_a^{s(t)} |u_t(t)|^2 dzdt  \nonumber \\
& + \frac{k}{2}\int_a^{s(t_1)}|u_z(t_1)|^2dz -\frac{k}{2}\int_a^{s_0}|u_z(0)|^2dz + \frac{k}{2}\int_0^{t_1}\int_a^{s(t)} \frac{s_t(t)}{s(t)-a}|u_z(t)|^2dzdt \nonumber \\
& + \Phi(s(t_1), \tilde{u}(t_1, 1)) -  \Phi(s(0), \tilde{u}_0(1)) + \frac{a_0}{2}\int_0^{t_1}|\tilde{u}(t, 1)|^2\varphi'(s(t))s_t(t) dt \nonumber \\
& + \hat{\beta}(b(t_1), \tilde{u}(t_1, 0))-\hat{\beta}(b(0), \tilde{u}_0(0)) - |\beta'|_{L^{\infty}(\mathbb{R})}\frac{b^*}{H} \int_0^{t_1}|b_t(t)| dt \nonumber \\
\leq & \int_0^{t_1} \int_a^{s(t)} -u_t(t)u_z(t)\frac{z-a}{s(t)-a}s_t(t) dzdt \ \mbox{ for }t_1\in [0, T].
\end{align}
Here, we note that 
\begin{align}
\label{eq4-22}
& -\int_0^{t_1} \int_a^{s(t)} u_t(t)u_z(t)\frac{z-a}{s(t)-a}s_t(t) dzdt \nonumber \\
= & -\int_0^{t_1} \int_a^{s(t)} ku_{zz}(t) u_z(t) \frac{z-a}{s(t)-a}s_t(t) dzdt \nonumber \\
= & -\int_0^{t_1} \int_a^{s(t)} \frac{k}{2}\biggl(\frac{\partial}{\partial z}|u_z(t)|^2\biggr) \frac{z-a}{s(t)-a}s_t(t) dzdt \nonumber \\
= & -\int_0^{t_1} \frac{k}{2}|u_z(t, s(t))|^2s_t(t) dt + \frac{k}{2}\int_0^{t_1}\int_a^{s(t)}\frac{s_t(t)}{s(t)-a}|u_z(t)|^2dzdt. 
\end{align}
Hence, by applying (\ref{eq4-22}) to (\ref{eq4-21}) we obtain that 
\begin{align}
\label{eq4-23}
& \int_0^{t_1}\int_a^{s(t)} |u_t(t)|^2 dzdt \nonumber \\ 
& + \frac{k}{2}\int_a^{s(t_1)}|u_z(t_1)|^2dz - \frac{k}{2}\int_a^{s_0}|u_z(0)|^2dz + \frac{k}{2}\int_0^{t_1}\int_a^{s(t)} \frac{s_t(t)}{s(t)-a}|u_z(t)|^2dzdt \nonumber \\
& + \Phi(s(t_1), \tilde{u}(t_1, 1)) -  \Phi(s(0), \tilde{u}_0(1)) + \frac{a_0}{2}\int_0^{t_1}|\tilde{u}(t, 1)|^2\varphi'(s(t))s_t(t) dt \nonumber \\
& + \hat{\beta}(b(t_1), \tilde{u}(t_1, 0))-\hat{\beta}(b(0), \tilde{u}_0(0)) - |\beta'|_{L^{\infty}(\mathbb{R})}\frac{b^*}{H} \int_0^{t_1}|b_t(t)| dt \nonumber \\ 
\leq & -\int_0^{t_1} \frac{k}{2}|u_z(t, s(t))|^2s_t(t) dt + \frac{k}{2}\int_0^{t_1}\int_a^{s(t)}\frac{s_t(t)}{s(t)-a}|u_z(t)|^2dzdt \ \mbox{ for }t_1\in [0, T].
\end{align}
The forth term in the left-hand side and the last term in the right-hand side are canceled out and by the definition of $\hat{\beta}$, $-\hat{\beta}(b(0), \tilde{u_0}(0))$ is positive. 
Also, on the seventh term of the left-hand side of (\ref{eq4-23}), the following estimate holds:
\begin{align}
& \varphi'(s(t))s_t(t)|\tilde{u}(t, 1)|^2 \nonumber \\
=& \varphi'(s(t)) \left(\frac{1}{a_0}|s_t(t)|^2 + \varphi(s(t))s_t(t)\right) \tilde{u}(t, 1) \nonumber \\
=& \frac{\varphi'(s(t))}{a_0}|s_t(t)|^2 \tilde{u}(t, 1) + \varphi'(s(t))\varphi(s(t))s_t(t)\tilde{u}(t, 1) \nonumber \\
=&  \frac{\varphi'(s(t))}{a_0}|s_t(t)|^2 \tilde{u}(t, 1) + \varphi'(s(t))\varphi(s(t))\left(\frac{1}{a_0}|s_t(t)|^2 + \varphi(s(t))s_t(t)\right) \nonumber \\
=& \frac{\varphi'(s(t))}{a_0}|s_t(t)|^2 \tilde{u}(t, 1) + \frac{\varphi'(s(t))\varphi(s(t))}{a_0}|s_t(t)|^2 + \varphi'(s(t))(\varphi(s(t)))^2s_t(t) \nonumber \\
=& \frac{\varphi'(s(t))}{a_0}|s_t(t)|^2 \tilde{u}(t, 1) + \frac{\varphi'(s(t))\varphi(s(t))}{a_0}|s_t(t)|^2 + \frac{1}{3}\frac{d}{dt}(\varphi(s(t))^3, \nonumber 
\end{align}
namely, 
\begin{align}
\label{eqb-17}
\frac{a_0}{2}\int_0^{t_1}|\tilde{u}(t, 1)|^2\varphi'(s(t))s_t(t) dt \geq \frac{a_0}{2}\left(\frac{1}{3} (\varphi(s(t_1)))^3-\frac{1}{3}(\varphi(s_0))^3 \right) \mbox{ for }t_1\in [0, T].
\end{align}
Therefore, (\ref{eq4-23}) combining with (\ref{eqb-17}) gives  
\begin{align}
\label{eq4-24}
& \int_0^{t_1}\int_a^{s(t)} |u_t(t)|^2 + \frac{k}{2}\int_a^{s(t_1)}|u_z(t_1)|^2dz  +\frac{a_0}{6}(\varphi(s(t_1)))^3 \nonumber \\
\leq & \frac{k}{2}\int_a^{s_0}|u_z(0)|^2dz +  |\Phi(s(t_1), \tilde{u}(t_1, 1))| + |\Phi(s(0), \tilde{u}_0(1))|  \nonumber \\
& + |\hat{\beta}(b(t_1), \tilde{u}(t_1, 0))|  +  \frac{a_0}{6}(\varphi(s_0))^3  + |\beta'|_{L^{\infty}(\mathbb{R})}\frac{b^*}{H} \int_0^{t_1}|b_t(t)| dt\nonumber \\
& -\int_0^{t_1} \frac{k}{2}|u_z(t, s(t))|^2s_t(t) dt \mbox{ for }t_1\in [0, T]. 
\end{align}
What concerns  the right-hand side of (\ref{eq4-24}), by (1.3), (1.4), (A3), (A4), (A5), $0\leq \varphi(a) \leq \tilde{u}(t)\leq b^*H^{-1}$ on $[0, 1]$ for $t\in [0, T]$ and the definition of $\Phi$ and $\hat{\beta}$, we proceed as follows:
\begin{align}
\label{eq4-25}
|\Phi(s(t_1), \tilde{u}(t_1, 1))| & =\biggl | a_0\left(\frac{\tilde{u}^3(t_1, 1)}{3}-\frac{\varphi(s(t_1))}{2}\tilde{u}^2(t_1, 1)\right) \biggr| \nonumber \\
& \leq \frac{a_0}{3}|\tilde{u}^3(t_1, 1)|\leq \frac{a_0}{3}\left(\frac{b^*}{H} \right)^3,
\end{align}
\begin{align}
\label{eq4-26}
|\Phi(s(0), \tilde{u}_0(1))| \leq \frac{a_0}{3}|\tilde{u}^3_0(1)|\leq \frac{a_0}{3}\left(\frac{b^*}{H} \right)^3, 
\end{align}
\begin{align}
\label{eq4-27}
|\hat{\beta}(b(t_1), \tilde{u}(t_1, 0))| \leq \int_0^{\tilde{u}(t_1, 0)}\beta(b(t)-H\tau)d\tau \leq k_0 \frac{b^*}{H}, 
\end{align}
and 
\begin{align}
\label{eq4-29}
&-\int_0^{t_1} \frac{k}{2}|u_z(t, s(t))|^2s_t(t) dt \leq \frac{k}{2}\int_0^{t_1}\biggl|\frac{1}{k}(u(t, s(t))s_t(t) \biggr|^2 |s_t(t)| dt \nonumber \\
\leq & \frac{a^3_0}{2k}\int_0^{t_1}|u(t, s(t))|^5 dt \leq \frac{a^3_0}{2k}\left(\frac{b^*}{H} \right)^5T. 
\end{align}
Finally, by applying all estimates (\ref{eq4-25})-(\ref{eq4-29}) to (\ref{eq4-24}) and $b\in W^{1,2}(0, T)$ as in (A2) we see that there exists $C$ which depends on $s_0$, $b^*$, $a_0$, $H$, $c_{\varphi}$, $k_0$ and $T$ such that (\ref{eq4-2}) holds. Thus, Lemma \ref{lem3-1} is proved. 
\end{proof}

\section{Proof of Theorems \ref{th3} and \ref{b-4}}

\subsection{Global existence} 

In this section, we prove Theorem \ref{th3} -- our global existence result.  
First, we note that (P)$_{\tilde{u}_0, s_0, b}$ has a unique solution locally in time as shown in \cite{KA}. 

\begin{lem}(Local solvability; cf. \cite{KA})
\label{lem3}
Assume (A1)-(A5). Then, for $T>0$ and $L>s_0$ there exists $T'<T$ such that (P)$_{\tilde{u}_0, s_0, b}$ has a unique solution $(s, \tilde{u})$ on $[0, T']$ satisfying $s<L$ on $(0, T']$ and $\varphi(a)\leq \tilde{u} \leq b^*H^{-1}$ on $Q(T')=(0, T')\times (0, 1)$.
\end{lem}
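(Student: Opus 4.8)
The plan is to prove Lemma~\ref{lem3} exactly as in \cite{KA}, by a Banach fixed-point argument for the free boundary. Fix $T>0$ and $L>a$ (we may assume $L>s_0$, which is the relevant case) and abbreviate $M:=|h|_{L^{\infty}(0,T)}H^{-1}$ and $M_0:=a_0\max\{M,c_0\}$. For $T'\in(0,T)$ to be fixed later, consider the complete metric space
\[
\mathcal{S}(T'):=\Bigl\{ s\in C([0,T']) \ :\ s(0)=s_0,\ \tfrac{a+s_0}{2}\le s\le L \text{ on }[0,T'],\ |s(t)-s(\sigma)|\le M_0|t-\sigma|\ \forall\, t,\sigma \Bigr\}
\]
equipped with the sup-metric. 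For $s\in\mathcal{S}(T')$ the problem \eqref{p2-1}--\eqref{p2-3},\eqref{p2-6} is a linear, uniformly parabolic initial--boundary value problem on $Q(T')$ (uniform parabolicity follows from $s-a\ge\tfrac{s_0-a}{2}>0$; the coefficients $\tfrac{k}{(s-a)^2}$ and $\tfrac{ys_t}{s-a}$ are bounded since $s\in W^{1,\infty}$), so by standard parabolic theory it has a unique solution $\tilde u=\tilde u_s\in W^{1,2}(Q(T'))\cap L^{\infty}(0,T';H^1(0,1))\cap L^2(0,T';H^2(0,1))$, whose trace $\tilde u_s(\cdot,1)\in L^2(0,T')$ is well defined. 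Define $\Lambda s\in C([0,T'])$ by
\[
(\Lambda s)(t):=s_0+a_0\int_0^t\bigl(\tilde u_s(\tau,1)-\varphi(s(\tau))\bigr)\,d\tau ,\qquad t\in[0,T'] ,
\]
so that a fixed point of $\Lambda$ in $\mathcal{S}(T')$ is precisely a solution of (P)$_{\tilde u_0,s_0,h}$ on $[0,T']$ in the sense of Definition~\ref{def1}.

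First I would establish the a priori two-sided bound $\varphi(a)\le\tilde u_s\le M$ on $Q(T')$. Testing \eqref{p2-1} (multiplied by $s(t)-a$) with $[\tilde u_s-M]^+$ and integrating by parts, the boundary term at $y=0$ vanishes because on $\{\tilde u_s(\cdot,0)>M\}$ one has $h(t)-H\tilde u_s(t,0)\le h(t)-HM\le0$, so $\beta(h(t)-H\tilde u_s(t,0))=0$ by (A3); the boundary term at $y=1$ coming from \eqref{p2-3}, after writing $\tilde u_s(t,1)=M+[\tilde u_s(t,1)-M]^+$, is controlled by $|s_t|$ times a trace contribution that is absorbed into the Dirichlet energy via Young's inequality and a trace inequality; a Gronwall argument together with $\tilde u_0\le M$ (from (A5)$'$) then gives $[\tilde u_s-M]^+\equiv0$. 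The lower bound $\tilde u_s\ge\varphi(a)$ is obtained symmetrically, now using $\beta\ge0$, $\varphi\ge0$ and $\tilde u_0\ge\varphi(a)$. Granting $\varphi(a)\le\tilde u_s(\cdot,1)\le M$ and $0\le\varphi\le c_0$, the integrand in the definition of $\Lambda s$ lies in $[-c_0,M]$, hence $\Lambda s$ is Lipschitz with constant $\le M_0$ and $|(\Lambda s)(t)-s_0|\le M_0T'$; choosing $T'$ so small that $M_0T'\le\min\{\tfrac{s_0-a}{2},\,L-s_0\}$ yields $\Lambda s\in\mathcal{S}(T')$, i.e. $\Lambda$ maps $\mathcal{S}(T')$ into itself.

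Next I would show that $\Lambda$ is a contraction for $T'$ small. Given $s_1,s_2\in\mathcal{S}(T')$ with solutions $\tilde u_1,\tilde u_2$, the difference $w:=\tilde u_1-\tilde u_2$ solves a linear parabolic problem whose right-hand side and boundary data are linear in $s_1-s_2$, $s_{1,t}-s_{2,t}$ and in $w$ itself, with coefficients bounded in terms of the barrier $\tfrac{s_0-a}{2}$ and of $M_0$; testing with $w$ and using $w(0,\cdot)=0$ produces an energy estimate of the form
\[
\|w\|_{C([0,T'];L^2(0,1))}+\|w\|_{L^2(0,T';H^1(0,1))}\le C(T')\,\|s_1-s_2\|_{C([0,T'])},\qquad C(T')\to0\ \text{as}\ T'\to0 .
\]
Combined with $|\varphi(s_1)-\varphi(s_2)|\le c_\varphi|s_1-s_2|$ from (A4) and the trace inequality for $w(\cdot,1)$, this gives $\|\Lambda s_1-\Lambda s_2\|_{C([0,T'])}\le a_0\bigl(\sqrt{T'}\,C(T')+c_\varphi T'\bigr)\|s_1-s_2\|_{C([0,T'])}$, a strict contraction for $T'$ small enough. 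Banach's theorem then yields the unique $s\in\mathcal{S}(T')$, hence the unique pair $(s,\tilde u_s)$; the bounds $s\le L$ and $\varphi(a)\le\tilde u\le M$ are inherited from $\mathcal{S}(T')$ and from the a priori estimate above. The main obstacle is the boundary condition \eqref{p2-3} at $y=1$: it is linear in $\tilde u$ but carries the factor $s_t$, so both the maximum-principle argument for $\tilde u\le M$ and the contraction estimate must absorb this boundary contribution using $|s_t|\le M_0$ and the trace inequality; relatedly, the diffusion coefficient $\tfrac{k}{(s-a)^2}$ degenerates as $s\downarrow a$, which is ruled out only by keeping the barrier $s\ge\tfrac{a+s_0}{2}$ inside $\mathcal{S}(T')$ — this is why $T'$ must be taken small relative to $s_0-a$.
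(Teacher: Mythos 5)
You should note at the outset that this paper does not prove Lemma \ref{lem3} at all: it is imported from \cite{KA}, so your proposal can only be compared with the general fixed-point strategy of that reference, which it does follow in outline (auxiliary problem for a given boundary curve, map $\Lambda$, Banach fixed point, smallness of $T'$). Within that outline, however, two central estimates do not close as you have written them.

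The more serious gap is the claim that the auxiliary problem for an \emph{arbitrary} $s\in\mathcal{S}(T')$ already satisfies $\varphi(a)\le\tilde u_s\le M$ (your $M=|h|_{L^{\infty}(0,T)}H^{-1}$). Both bounds rest on a sign structure at $y=1$ that is available only through the coupling $s_t=a_0(\tilde u(t,1)-\varphi(s(t)))$, which you discard by fixing $s$. Testing with $[\tilde u_s-M]^+$, the boundary term coming from (\ref{p2-3}) is $\tilde u_s(t,1)\,s_t(t)\,[\tilde u_s(t,1)-M]^+$; when $s_t(t)<0$ it has the wrong sign, and after writing $\tilde u_s(t,1)=M+[\tilde u_s(t,1)-M]^+$ it leaves a term \emph{linear} in the positive part, so your Young--trace--Gronwall scheme only yields a bound of the type $C(e^{Ct}-1)$, not $0$. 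In fact the bound is false in general: for $s_t<0$, (\ref{p2-3}) is an influx proportional to $\tilde u(t,1)|s_t|$ (a Robin condition of growing type), and symmetrically, for $s_t>0$ with $\tilde u(t,1)<\varphi(a)$ it is an outflux, so $\tilde u_s$ can leave $[\varphi(a),M]$. In the coupled problem these scenarios are excluded precisely because $\tilde u(t,1)>M\ge\varphi(s(t))$ forces $s_t>0$, while $\tilde u(t,1)<\varphi(a)\le\varphi(s(t))$ forces $s_t<0$ (the same mechanism that drives Lemma \ref{lem4}). The standard repair, and the natural reading of \cite{KA}, is to truncate $\tilde u(t,1)$ (and the argument of $\beta$ at $y=0$, which also makes your ``linear'' auxiliary problem only semilinear) to the interval $[\varphi(a),M]$ in (\ref{p2-2})--(\ref{p2-4}), run the fixed point for the truncated system, and only \emph{a posteriori}, for the coupled fixed point, prove the two-sided bound and remove the truncation; without some such device your self-map estimate (that $\Lambda s$ is Lipschitz with constant $M_0$ and stays in $\mathcal{S}(T')$) is unjustified. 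The second gap is the contraction estimate: the difference of the two auxiliary problems involves $s_{1,t}-s_{2,t}$, both through the drift $\tfrac{ys_t}{s-a}\tilde u_y$ and through (\ref{p2-3}), and these contributions are not controlled by $\|s_1-s_2\|_{C([0,T'])}$, so the inequality $\|w\|\le C(T')\|s_1-s_2\|_{C([0,T'])}$ does not follow from the energy test you describe. You need either to include $|s_{1,t}-s_{2,t}|_{L^2(0,T')}$ in the metric (closing the loop via $(\Lambda s_i)_t=a_0(\tilde u_i(\cdot,1)-\varphi(s_i))$ and a trace estimate for $w(\cdot,1)$), or an integration by parts in time with the corresponding higher-order estimates. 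With these repairs the architecture is sound, but as written the two key steps fail.
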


Let $T>0$ and  put $\tilde{L}(T)=a_0b^*H^{-1}T +s_0$. 
Clearly, $a_0b^*H^{-1}T +s_0>s_0$ so that 
by Lemma \ref{lem3} there exists $T_1<T$ such that (P)$_{\tilde{u}_0, s_0, b}$ has a unique solution $(s, \tilde{u})$ on $[0, T_1]$ satisfying $s<\tilde{L}(T)$ on $(0, T_1]$ and $\varphi(a)\leq \tilde{u} \leq b^*H^{-1}$ on $(0, T_1)\times (0, 1)$. Let us take  
\begin{align*}
\tilde{T}:=\mbox{sup}\{T_1>0 |  (\mbox{P})_{\tilde{u}_0, s_0, b} \mbox{ has a solution }(s, \tilde{u}) \mbox{ on } [0, T_1]\}.
\end{align*}
By the local existence result, we deduce that $\tilde{T}>0$. Now. we assume $\tilde{T}<T$. 
Then, by (\ref{p2-4}) we obtain that 
\begin{align}
\label{eq5-1}
s(t) &=\int_0^{t} a_0(\tilde{u}(t, 1)-\varphi(s(t))) dt +s_0 \nonumber \\
&\leq a_0\frac{b^*}{H} \tilde{T}+s_0 =\tilde{L}(\tilde{T}) <\tilde{L}(T)\mbox{ for }t\in [0, \tilde{T}). 
\end{align}
Next, similarly to the derivation of (\ref{eq4-15}), we have that 
\begin{align}
& \int_0^t\int_0^1|\tilde{u}_t(\tau, y)|^2dyd\tau \nonumber \\
=&\int_0^t\int_a^{s(\tau)}\left(\frac{1}{s(\tau)-a}\right)|u_t(\tau, z)+u_z(\tau, z)(s(\tau)-a)|^2dzd\tau \nonumber \\
\leq &\int_0^t\int_a^{s(\tau)}\left(\frac{1}{s(\tau)-a}\right)|u_t(t, z)|^2dzd\tau + \int_0^t\int_a^{s(\tau)}2|u_t(\tau, z)||u_z(\tau, z)|dzd\tau \nonumber \\
& + \int_0^t\int_a^{s(\tau)}(s(\tau)-a)|u_z(\tau, z)|^2dzd\tau  \label{eq5-2} 
\end{align}
as well as 
\begin{align}
\label{eq5-3}
& \int_0^1|\tilde{u}_y(t)|^2dy =\int_a^{s(t)}\frac{1}{s(t)-a}|u_z(t) (s(t)-a)|^2dzdt.
\end{align}
Therefore, from (\ref{eq5-2}), (\ref{eq5-3}) jointly with Lemma \ref{lem2} and Lemma \ref{lem3-1},  it follows that 
\begin{align}
\label{eq5-4}
\int_0^{t}|\tilde{u}_t(\tau)|^2_{L^2(0, 1)} d\tau \leq \biggl( \left(\frac{1}{s_*-a} +1 \right) + (\tilde{L}(T)-a+1)T\biggr) C(T)
\mbox{ for }t\leq \tilde{T}, 
\end{align}
and 
\begin{align}
\label{eq5-5}
|\tilde{u}_y(t)|^2_{L^2(0, 1)}\leq (\tilde{L}(T)-a)C(T) \mbox{ for }t\leq \tilde{T}, 
\end{align}
where $s_*$ is the same as in Lemma \ref{lem2}. 
By (\ref{eq5-4}) and (\ref{eq5-5}) we observe that for some $\tilde{u}_{\tilde{T}}\in H^1(0, 1)$, 
$\tilde{u}(t)\to \tilde{u}_{\tilde{T}}$ strongly in $L^2(0, 1)$ and weakly in $H^1(0, 1)$
as $t\to \tilde{T}$. Then, it is clear that $\varphi(a)\leq \tilde{u}_{\tilde{T}} \leq b^*H^{-1}$. Also, by (\ref{p2-4}) and $\varphi(a)\leq \tilde{u} \leq b^*H^{-1}$ on $[0, 1]$ for $t\in [0, \tilde{T})$ we have $|s_t(t)|\leq a_0b^*H^{-1}$ for $t\in [0, \tilde{T})$. This implies that $\{s(t)\}_{t\in [0, \tilde{T})}$ is a Cauchy sequence in $\mathbb{R}$ so that $s(t) \to s_{\tilde{T}}$ in $\mathbb{R}$ as $t\to \tilde{T}$, and from Lemma \ref{lem2}, $s_{\tilde{T}}$ satisfies that $a<s_{\tilde{T}}\leq a_0b^*H^{-1}\tilde{T}+s_0$. Now, we put $u_{\tilde{T}}(z)=\tilde{u}_{\tilde{T}}(\frac{z-a}{s_{\tilde{T}}-a})$ for $z\in [a, s_{\tilde{T}}]$. Then,  we see that $u_{\tilde{T}}\in H^1(a, s_{\tilde{T}})$ and $\varphi(a)\leq u_{\tilde{T}}\leq b^*H^{-1}$ and we can consider $(s_{\tilde{T}}, u_{\tilde{T}})$ as a initial data.
Therefore, by replacing $\tilde{L}(T)$ by $\tilde{L}(T)+ s_{\tilde{T}}$ and repeating the argument of the local existence we  obtain a unique solution of (P)$_{\tilde{u}_{\tilde{T}}, s_{\tilde{T}}, b}$ which implies that a solution of (P)$_{\tilde{u}_0, s_0, b}$ can be extended beyond $\tilde{T}$. This is a contradiction for the definition of $\tilde{T}$, and therefore, we see that $\tilde{T}=T$. Thus, we can show the existence of a unique solution of (P)$_{\tilde{u}_{\tilde{T}}, s_{\tilde{T}}, b}$ on $[0, T]$, and thus Theorem \ref{th3} is proven. 


\subsection{Continuous dependence estimates} 

In this section, we prove Theorem \ref{b-4} on the continuous dependence estimate of the solution $(s, u)$ of (P)$_{u_0,s_0, b}$ with respect to the given data $u_0$, $s_0$ and $b$. The working strategy is here standard, see e.g. \cite{AM} for a related result. 
As departure point, we note from Lemma \ref{lem2} that the free boundary $s$ has a lower bound $s_*$ on $[0, T]$ for any $T>0$. 
Also, we see from (\ref{eq5-1}) that $s(t) \leq \tilde{L}(T)$ for $t\in [0, T]$, where $\tilde{L}(T)$ is the same definition of (\ref{eq5-1}).  
For simplicity, we put $\tilde{u}=\tilde{u}_1-\tilde{u}_2$ and $s=s_1-s_2$ and allow the comparison to hold on the same time interval, say $[0,T]$.  
Since $(s_i, \tilde{u}_i)$ for $i=1, 2$ satisfies 
\begin{align}
\tilde{u}_{it}(t, y)-\frac{k}{(s_i(t)-a)^2}\tilde{u}_{iyy}(t, y)=\frac{ys_{it}(t)}{s_i(t)-a}\tilde{u}_{iy}(t, y) \mbox{ for }(t, y)\in Q(T), \nonumber 
\end{align}
we can obtain 
\begin{align}
\label{eqb-28}
& \frac{1}{2} \frac{d}{dt} |\tilde{u}(t)|^2_{L^2(0, 1)} - \int_0^1 \left( \frac{k}{(s_1(t)-a)^2} \tilde{u}_{1yy}(t) - \frac{k}{(s_2(t)-a)^2} \tilde{u}_{2yy}(t) \right) \tilde{u}(t) dy \nonumber \\
& = \int_0^1 \left( \frac{ys_{1t}(t)}{s_1-a} \tilde{u}_{1y}(t) - \frac{ys_{2t}(t)}{s_2-a} \tilde{u}_{2y}(t) \right) \tilde{u}(t) dy. 
\end{align}
On the second term of the left-hand side of (\ref{eqb-28}), by integration by parts, we observe that 
\begin{align}
\label{eqb-29}
& - \int_0^1 \left( \frac{k}{(s_1(t)-a)^2} \tilde{u}_{1yy} - \frac{k}{(s_2(t)-a)^2} \tilde{u}_{2yy} \right) \tilde{u} dy \nonumber \\
= &  -\left(\frac{k}{(s_1(t)-a)^2} \tilde{u}_{1y}(t, 1) - \frac{k}{(s_2(t)-a)^2} \tilde{u}_{2y}(t, 1) \right)\tilde{u}(t, 1) \nonumber \\
& +  \left( \frac{k}{(s_1(t)-a)^2} \tilde{u}_{1y}(t, 0) - \frac{k}{(s_2(t)-a)^2} \tilde{u}_{2y}(t, 0) \right)\tilde{u}(t, 0) \nonumber \\
& + \int_0^1 \left( \frac{k}{(s_1(t)-a)^2} \tilde{u}_{1y}(t) - \frac{k}{(s_2(t)-a)^2} \tilde{u}_{2y}(t) \right) \tilde{u}_y(t) dy.
\end{align}
Now, we denote the right-hand side of (\ref{eqb-29}) by $I_1$, $I_2$ and $I_3$. The term $I_1$ can be transformed as 
\begin{align}
I_1(t) &= \left( \frac{\tilde{u}_1(t, 1)s_{1t}(t)}{s_1(t)-a} - \frac{\tilde{u}_2(t, 1)s_{2t}(t)}{s_2(t)-a} \right) \tilde{u}(t, 1) \nonumber \\
& = \frac{s_{1t}(t)}{s_1(t)-a} |\tilde{u}(t, 1)|^2 + \left( \frac{\tilde{u}_2(t, 1)s_{1t}(t)}{s_1(t)-a} - \frac{\tilde{u}_2(t, 1)s_{2t}(t)}{s_2(t)-a} \right) \tilde{u}(t, 1) \nonumber \\
& =\frac{s_{1t}(t)}{s_1(t)-a} |\tilde{u}(t, 1)|^2 + \frac{\tilde{u}_2(t, 1)}{s_1(t)-a}s(t)\tilde{u}(t, 1) \nonumber \\
&  \quad + \tilde{u}_2(t, 1)s_{2t}(t)\left(\frac{1}{s_1(t)-a} -\frac{1}{s_2(t)-a} \right) \tilde{u}(t, 1). \nonumber 
\end{align}
Accordingly, by the properties for solutions on $[0, T]$: $|s_{it}| \leq a_0 b^*H^{-1}$, $s_* \leq s_i\leq \tilde{L}(T)$ and $\tilde{u}_i(\cdot, 1) \leq b^*H^{-1}$ it follows that 
\begin{align}
\label{eqb-31}
I_1(t) & \geq -M_1(|\tilde{u}(t, 1)|^2 + |s(t)|^2) \mbox{ for a.e. }t\in [0, T], 
\end{align}
where $M_1$ is a positive constant. Next, the term $I_2$ can be dealt as 
\begin{align}
I_2(t) &= -\left( \frac{\beta(b_1(t)-H\tilde{u}_1(t, 0))}{s_1(t)-a} -\frac{\beta(b_2(t)-H\tilde{u}_2(t, 0))}{s_2(t)-a} \right) \tilde{u}(t, 0) \nonumber \\
&= -\frac{1}{s_1(t)-a}\biggl(\beta(b_1(t)-H\tilde{u}_1(t, 0))-\beta(b_2(t)-H\tilde{u}_2(t, 0))\biggr)\tilde{u}(t, 0) \nonumber \\
& \quad -\beta(b_2(t)-H\tilde{u}_2(t, 0))\left(\frac{1}{s_1(t)-a}-\frac{1}{s_2(t)-a} \right)\tilde{u}(t, 0). \nonumber 
\end{align}
Note that by the Lipshitz continuity of $\beta$, 
\begin{align}
\label{eqb-33}
|\beta(b_1(t)-H\tilde{u}_1(t, 0))-\beta(b_2(t)-H\tilde{u}_2(t, 0))| \leq |\beta'|_{L^{\infty}(\mathbb{R})}(|b(t)|+ H|\tilde{u}(t, 0)|).
\end{align}
Since $\beta \leq k_0$ and $s_* \leq s_i \leq \tilde{L}(T)$ on $[0, T]$ we obtain the following estimate on the term $I_2$:
\begin{align}
\label{eqb-34}
I_2(t) \geq -M_2(|b(t)|^2 + |\tilde{u}(t, 0)|^2 + |s(t)|^2) \mbox{ for a.e. }t\in [0, T], 
\end{align}
where $M_2$ is a positive constant. 
On the term $I_3$, we can estimate as follows:
\begin{align}
\label{eqb-35}
I_3(t) &=  \frac{k}{(s_1(t)-a)^2} \int_0^1 |\tilde{u}_{y}(t)|^2 dy + \left( \frac{k}{(s_1(t)-a)^2}-\frac{k}{(s_2(t)-a)^2}\right) \int_0^1 \tilde{u}_{2y}(t)\tilde{u}_y(t) dy \nonumber \\
\geq & \frac{k}{(s_1(t)-a)^2} \int_0^1 |\tilde{u}_{y}(t)|^2 dy - \frac{k}{4(s_1(t)-a)^2} \int_0^1 |\tilde{u}_y(t)|^2 dy -\frac{k(2\tilde{L}(T))^2}{(s_*-a)^6}|s(t)|^2|\tilde{u}_{2y}(t)|^2_{L^2(0, 1)} \nonumber \\
\geq & \frac{3k}{4(s_1(t)-a)^2} \int_0^1 |\tilde{u}_{y}(t)|^2 dy -\frac{k(2\tilde{L}(T))^2C(T)}{(s_*-a)^6}|s(t)|^2 \mbox{ for a.e. }t\in [0, T],
\end{align}
where $C(T)$ is the same constant as in Lemma \ref{lem3-1}.  Also, we deal the right-hand side of (\ref{eqb-28}) which is denoted by $I_4$ as follows:
\begin{align}
\label{eqb-36}
|I_4(t)| &\leq \frac{|s_{1t}(t)|}{s_1(t)-a} \int_0^1 |\tilde{u}_y(t)||\tilde{u}(t)| dy \nonumber \\
& + \frac{|s_{1t}(t)||s(t)|}{(s_1(t)-a)(s_2(t)-a)} \int_0^1 |\tilde{u}_{2y}(t)||\tilde{u}(t)| dy \nonumber \\
& + \frac{|s_{t}(t)|}{s_2(t)-a} \int_0^1 |\tilde{u}_{2y}(t)||\tilde{u}(t)| dy \mbox{ for a.e. }t\in [0, T].
\end{align}
Since $|s_t(t)| \leq a_0(|\tilde{u}(t, 1)| + C_{\varphi}|s(t)|)$, by using $s_* \leq s_i\leq \tilde{L}(T)$ on $[0, T]$ and (\ref{eq5-3}), we have 
\begin{align}
\label{eqb-37}
|I_4(t)| \leq \frac{k}{8(s_1(t)-a)^2}|\tilde{u}_y(t)|^2_{L^2(0, 1)} + M_3(|s(t)|^2 + |\tilde{u}(t)|^2+|\tilde{u}(t, 1)|^2) \mbox{ for a.e. }t\in[0, T], 
\end{align}
where $M_3$ is a positive constant. From all estimates (\ref{eqb-31}), (\ref{eqb-34}), (\ref{eqb-35}) and (\ref{eqb-37}), we obtain from (\ref{eqb-28}) that 
\begin{align}
\label{eqb-38}
& \frac{1}{2} \frac{d}{dt} |\tilde{u}(t)|^2_{L^2(0, 1)} + \frac{5k}{8(s_1(t)-a)^2}|\tilde{u}_y(t)|^2_{L^2(0, 1)} \nonumber \\
\leq & M_4( |s(t)|^2  + |b(t)|^2 + |\tilde{u}(t, 0)|^2  + |\tilde{u}(t)|^2+|\tilde{u}(t, 1)|^2) \mbox{ for a.e. } t\in [0, T],
\end{align}
where $M_4$ is a positive constant depending on $M_1$, $M_2$ and $M_3$. 
Now, we give the estimate of $s$ by 
\begin{align}
\label{eqb-39}
& \frac{1}{2} \frac{d}{dt} |s(t)|^2 \leq a_0|\tilde{u}(t, 1)-(\varphi(s_1(t))-\varphi(s_2(t)))||s(t)| \nonumber \\
\leq & \frac{a_0}{2} (|\tilde{u}(t, 1)|^2 + |s(t)|^2 ) + a_0C_{\varphi}|s(t)|^2. 
\end{align}
By applying (\ref{eqb-39}) to (\ref{eqb-38}) we have that 
\begin{align}
\label{eqb-40}
& \frac{1}{2} \frac{d}{dt} |\tilde{u}(t)|^2_{L^2(0, 1)} + \frac{1}{2} \frac{d}{dt} |s(t)|^2 + \frac{5k}{8(s_1(t)-a)^2}|\tilde{u}_y(t)|^2_{L^2(0, 1)} \nonumber \\
\leq & M_5( |s(t)|^2 + |b(t)|^2 + |\tilde{u}(t, 0)|^2  + |\tilde{u}(t)|^2+|\tilde{u}(t, 1)|^2) \mbox{ for a.e. } t\in [0, T],
\end{align}
where $M_5$ is a positive constant depending on $M_4$, $a_0$ and $C_\varphi$.
Here, the Sobolev's embedding theorem in one dimension guarantees that 
\begin{align}
\label{eqb-41}
& |\tilde{u}(t, z)|^2  \leq C_e |\tilde{u}(t)|_{H^1(0, 1)}|\tilde{u}(t)|_{L^2(0, 1)} \nonumber \\
\leq & C_e (|\tilde{u}_y(t)|_{L^2(0, 1)}|\tilde{u}(t)|_{L^2(0, 1)} + |\tilde{u}(t)|^2_{L^2(0, 1)}) \nonumber \\
\leq & \frac{k}{16M_5(s_1(t)-a)^2}|\tilde{u}_y(t)|^2_{L^2(0, 1)} + \left(\frac{4M_5(\tilde{L}(T)-a)^2}{k}C^2_e + C_e\right) |\tilde{u}(t)|^2_{L^2(0, 1)} \nonumber \\
& \mbox{ at } y=0, 1 \mbox{ for a.e. }t\in [0, T],
\end{align}
where $C_e$ is the constant by Sobolev's embedding theorem. Consequently, by (\ref{eqb-40}) and (\ref{eqb-41}) we obtain that 
\begin{align}
\label{eqb-42}
& \frac{1}{2} \frac{d}{dt} |\tilde{u}(t)|^2_{L^2(0, 1)} + \frac{1}{2} \frac{d}{dt} |s(t)|^2 + \frac{k}{2(s_1(t)-a)^2}|\tilde{u}_y(t)|^2_{L^2(0, 1)} \nonumber \\
\leq & M_6( |s(t)|^2 + |b(t)|^2 + |\tilde{u}(t)|^2_{L^2(0, 1)}) \mbox{ for a.e. } t\in [0, T],
\end{align}
where $M_6$ is a positive constant. Therefore, Gronwall's inequality implies that there exists $M_T>0$ such that (\ref{eqb-22}) holds. Thus, Theorem \ref{b-4} is proved.


\section{Large time behavior of the free boundary} \label{large}

In this section, we discuss the large time behavior of a solution to (P)$_{u_0, s_0, b}$ as $t\to \infty$. We introduce firstly the auxiliary assumption 
(A2)' as follows: 
\newline
(A2)': $b\in W^{1, 2}_{loc}([0, \infty))$, $b_t\in L^1(0, \infty)$, lim$_{t\to \infty} b(t)=b_{\infty}$, $b-b_{\infty}\in L^1(0, \infty)$ and 
$b_*\leq b\leq b^*$ on $(0, \infty)$, where $b_*$ and $b^*$ are positive constants as in (A2). 
\newline
Clearly, we see that $b_*\leq b_{\infty}\leq b^*$. Next, we consider the following stationary problem (P)$_{\infty}$: find a pair $(u_{\infty}, s_{\infty}) \in L^2(a, s_{\infty}) \times \mathbb{R}$ satisfying 
$$
\begin{cases}
-ku_{\infty zz}=0 \mbox{ on } (a, s_{\infty}), \\
-ku_{\infty z}(a)=\beta(b_{\infty}-Hu_{\infty}(a)), \ -ku_{\infty z}(s_{\infty})=0,\\ 
u_{\infty}(s_{\infty})=\varphi(s_{\infty}).
\end{cases}
$$
By using the change of variable $\tilde{u}_{\infty}(y)=u_{\infty}((1-y)a+ys_{\infty})$ for $y\in (0, 1)$, it is easy to see that (P)$_{\infty}$ is equivalent to the following problem $(\tilde{\mbox{P}})_{\infty}$:
$$
\begin{cases}
-\displaystyle{\frac{k}{(s_{\infty}-a)^2}}\tilde{u}_{\infty yy}=0 \mbox{ on } (0, 1), \\[2mm]
-\displaystyle{\frac{k}{s_{\infty}-a}}\tilde{u}_{\infty y}(0)=\beta(b_{\infty}-H\tilde{u}_{\infty}(0)), \ -\displaystyle{\frac{k}{s_{\infty}-a}}\tilde{u}_{\infty y}(1)=0,\\[2mm] 
\tilde{u}_{\infty}(1)=\varphi(s_{\infty}).
\end{cases}
$$
The next lemma is concerned with the existence and the structure of a solution to the problem $(\tilde{\mbox{P}})_{\infty}$. 
\begin{lem}
\label{b-1} 
Let $W$ be the set of $s_{\infty}$ given by 
\begin{align}
W:=\{ s_{\infty} \in \mathbb{R} \ | \ s_{\infty} >a \mbox{ with } \varphi(s_{\infty})\geq b_{\infty}H^{-1} \}. \nonumber 
\end{align} 
Then,  $(s_{\infty}, \tilde{u}_{\infty})$ is a solution of $(\tilde{\mbox{P}})_{\infty}$ satisfying $s_{\infty}>a$ and $\tilde{u}_{\infty}\in H^2(a, s_{\infty})$ if and only if $s_{\infty}\in W$ and $\tilde{u}_{\infty}(y)=\varphi(s_{\infty})$ for $y\in (0, 1)$. 
\end{lem}
The proof of Lemma \ref{b-1} is trivial, hence we omit to show it.  
Here, we note that if $c_0<b_*H^{-1}$, then $W=\phi$. Indeed, if $c_0<b_*H^{-1}$, then $\varphi(r) \leq c_0 < b_*H^{-1} \leq b_{\infty}H^{-1}$ for $r\in \mathbb{R}$ so that 
there does not exist $r\in \mathbb{R}$ such that $\varphi(r) \geq b_{\infty}H^{-1}$. 
On the other hand, $b^*H^{-1}<c_0$ is a sufficient condition for $W \neq \phi$. 
Indeed, if $b^*H^{-1}<c_0$, then $b_{\infty}H^{-1 }\leq b^*H^{-1}<c_0$, and therefore, the mean value theorem guarantees that the existence of $s_{\infty}$ such that $s_{\infty}>a$ and $\varphi(s_{\infty})\geq b_{\infty}H^{-1}$. 
\newline
Now, we are able to state the result on the large time behavior of a solution as $t\to \infty$. Here, (A4)' is the following condition:
\newline
(A4)': In (A4) replace (\ref{c0-1}) by 
\begin{align}
\label{a4-1}
 0<c_0<\mbox{min}\{2\varphi(a), b_* H^{-1}\}.
\end{align}
Also, we put $r_*:=a+H(b^*)^{-1}|u_0|_{L^1(a, s_0)}$. Then, by $u_0 \leq b^*H^{-1}$ on $[a, s_0]$ in (A5), we note that $r_* \leq a +(s_0-a)<r_{\varphi}$. The following dichotomy becomes now available: 
\begin{thm}
\label{b-2}
Assume (A1), (A2)', (A3), (A4)' and (A5) and let (P)$_{u_0, s_0, b}$ be a solution $(s, u)$ on $[0, \infty)$. 
Then, either of the following holds:
\begin{align}
(i) & \lim_{t \to \infty} \int_0^t \beta(b(\tau)-Hu(\tau, a))d\tau=\infty \mbox{ and } \lim_{t\to \infty}s(t)=\infty. \nonumber \\
(ii) &\lim_{t \to \infty} \int_0^t \beta(b(\tau)-Hu(\tau, a))d\tau \leq  \frac{b^*}{H}(r_{\varphi}-r_*) \mbox{ and } \limsup_{t \to \infty}s(t)\leq a+ \frac{1}{\varphi(a)} \frac{b^*}{H}(s_0-a+r_{\varphi}-r_*). \nonumber 
\end{align}
\end{thm}
This result also gives a partial answer to question (Q2). We handle its proof in the next subsections. 
\subsection{Uniform estimates with respect to time}
To prove Theorem \ref{b-2}, we provide some uniform estimates for the solution with respect to time $t$. 
Throughout of the section we assume (A1), (A2)' (A3), (A4)', (A5). By Theorem \ref{th2}, we note that (P)$_{u_0, s_0, b}$ has a solution $(s, u) $ on $[0, T]$ for $T>0$ satisfying 
$\varphi(a)\leq u\leq b^*H^{-1}$ on $[a, s(t)]$ for $t\in [0, T]$. 
\begin{lem}
\label{b-3}
Let $(s, u)$ be a solution of  (P)$_{u_0, s_0, b}$ on $[0, \infty)$ and put $F(t)=\int_0^t \beta(h(\tau)-Hu(\tau, a))d\tau$ for any $t>0$. 
If there exists $t_0>0$ such that $F(t_0)>b^*H^{-1}(r_{\varphi}-r_*)$, 
then there exist $C_1>0$ and $C_2>0$ which is independent of time $t$ such that  
\begin{align}
(i) & \int_0^t |s_t(\tau)|^2 d\tau + \int_0^t |u_z(\tau)|^2_{L^2(a, s(\tau))} d\tau \leq C_1(1+F(t)) \mbox{ for }t>0, \label{eqb0-1}\\
(ii) & \int_{0}^t|u_t(\tau)|^2_{L^2(a, s(\tau))}d\tau + |u_z(t)|^2_{L^2(a, s(t))} \leq C_2(1+F(t)) \mbox{ for }t>0. \label{eqb0-2} 
\end{align}
\end{lem}
\begin{proof}
First, we show that $s(t)>r_{\varphi}$ for any $t\geq t_0$. By the derivation of the lower bound of $s$ in Lemma \ref{lem2}, we have that 
\begin{align}
\label{eqb-00}
s(t) & \geq a + \frac{H}{b^*} \left (\int_a^{s_0}u_0 dz + \int_0^t \beta(b(\tau)-Hu(\tau, a)) d\tau \right) \mbox{ for }t>0. 
\end{align}
Then, from the monotonicity of $F$ with respect to time $t$ and $F(t_0)>b^*H^{-1}(r_{\varphi}-r_*)$, we see that 
\begin{align}
\label{eqb-0}
s(t) & \geq a + \frac{H}{b^*} \left (\int_a^{s_0}u_0 dz  + \int_0^{t_0} \beta(b(\tau)-Hu(\tau, a)) d\tau \right) \nonumber \\
& >a + \frac{H}{b^*} \left (\int_a^{s_0}u_0 dz  + \frac{b^*}{H}(r_{\varphi}-r_*) \right) \nonumber \\
&> a + \frac{H}{b^*} \int_a^{s_0}u_0 dz  + (r_{\varphi}-r_*) \mbox{ for }t\geq t_0. 
\end{align}
Since $r_*=a+H (b^*)^{-1}|u_0|_{L^1(a, s_0)}$, (\ref{eqb-0}) implies that $s(t)>r_{\varphi}$ for $t\geq t_0$. 
Using this result, we prove (i). By integrating $[a, s(t)]$ over (\ref{p1-1}) with the boundary conditions (\ref{p1-2}) and (\ref{p1-3}) we have that 
\begin{align}
\label{eqb-1}
& \frac{1}{2} \frac{d}{dt} \int_a^{s(t)}|u(t)|^2dz-\frac{s_t(t)}{2}|u(t, s(t))|^2  + k\int_a^{s(t)}|u_z(t)|^2 dz \nonumber \\
& + u(t, s(t))s_t(t) u(t, s(t)) - \beta(b(t)-Hu(t, a))u(t, a) \nonumber \\
& =0 \mbox{ for a.e. }t\in [0, T].
\end{align}
On the second and forth term in the left-hand side of (\ref{eqb-1}) by using $s_t=a_0(u(t, s(t))-\varphi(s(t)))$ we can deal as 
\begin{align}
\label{eqb-2}
& -\frac{s_t(t)}{2}|u(t, s(t))|^2 + u(t, s(t))s_t(t) u(t, s(t)) = \frac{s_t(t)}{2}|u(t, s(t))|^2 \nonumber \\
= & \frac{1}{2} \left(\frac{|s_t(t)|^2}{a_0} + \varphi(s(t))s_t(t) \right)u(t, s(t)) \nonumber \\
= & \frac{1}{2a_0}|s_t(t)|^2 u(t, s(t)) + \frac{1}{2}\varphi(s(t))s_t(t)u(t, s(t)) \nonumber \\
= & \frac{1}{2a_0}|s_t(t)|^2 u(t, s(t)) + \frac{1}{2}\varphi(s(t))\left(\frac{|s_t(t)|^2}{a_0} + \varphi(s(t))s_t(t) \right) \nonumber \\
= & \frac{1}{2a_0}|s_t(t)|^2 u(t, s(t)) + \frac{1}{2a_0}|s_t(t)|^2 \varphi(s(t)) + \frac{1}{2}\varphi^2(s(t))s_t(t). 
\end{align}
Here, we note that $\varphi(s(t))=c_0$ for $t\geq t_0$ because $s(t)>r_{\varphi}$ for $t\geq t_0$. Therefore, we have that 
\begin{align}
\label{eqb-3}
& -\frac{s_t(t)}{2}|u(t, s(t))|^2 + u(t, s(t))s_t(t) u(t, s(t)) \nonumber \\
= & \frac{1}{2a_0}|s_t(t)|^2 u(t, s(t)) + \frac{1}{2a_0}|s_t(t)|^2 c_0 + \frac{c^2_0}{2}s_t(t) \mbox{ for }t\geq t_0.
\end{align}
By (\ref{eqb-1}) combining with (\ref{eqb-3}) we have that 
\begin{align}
\label{eqb-4}
& \frac{1}{2} \frac{d}{dt} \int_a^{s(t)}|u(t)|^2dz +  k\int_a^{s(t)}|u_z(t)|^2 dz \nonumber \\
& + \frac{1}{2a_0}|s_t(t)|^2 u(t, s(t)) + \frac{1}{2a_0}|s_t(t)|^2 c_0 + \frac{c^2_0}{2}s_t(t) \nonumber \\
=& \beta(b(t)-Hu(t, a))u(t, a) \mbox{ for a.e. }t\geq t_0. 
\end{align}
Hence, by integrating (\ref{eqb-4}) over $[t_0, t]$ for $t>t_0$ and using $u\leq b^*H^{-1}$ on $[a, s(t)]$ for $t>0$, we obtain that 
\begin{align}
\label{eqb-5}
& \frac{1}{2} \int_a^{s(t)}|u(t)|^2dz +  k\int_{t_0}^t \int_a^{s(\tau)}|u_z(\tau)|^2 dzd\tau +\frac{\varphi(a)+c_0}{2a_0}\int_{t_0}^t|s_t(\tau)|^2d\tau  + \frac{c^2_0}{2}s(t)  \nonumber \\
& \leq \frac{c^2_0}{2} s(t_0) + \frac{1}{2} \int_a^{s(t_0)}|u(t_0)|^2dz + \frac{b^*}{H} \int_{t_0}^t \beta(b(\tau)-Hu(\tau, a))d\tau.
\end{align}
Also, the integration of (\ref{eqb-1}) combining with (\ref{eqb-2}) on $[0, t]$ for $t\leq t_0$ gives 
\begin{align}
\label{eqb-6}
& \frac{1}{2} \int_a^{s(t)}|u(t)|^2dz +  k\int_{0}^t \int_a^{s(\tau)}|u_z(\tau)|^2 dzd\tau +\frac{\varphi(a)}{a_0}\int_0^t|s_t(\tau)|^2d\tau \nonumber \\
& \leq \frac{1}{2} \int_a^{s_0}|u_0|^2dz +\frac{c^2_0}{2}\int_0^{t}|s_t(\tau)|d\tau + \frac{b^*}{H} \int_{0}^t \beta(b(\tau)-Hu(\tau, a))d\tau.
\end{align}
The inequalities (\ref{eqb-5}) and (\ref{eqb-6}) guarantee that there exists $C_1>0$ such that (\ref{eqb0-1}) holds. 
\newline
Next, we show (\ref{eqb0-2}).  This result is obtained as the proof of Lemma \ref{lem3-1}. Indeed, by repeating the same derivation of (\ref{eq4-23}) we can have that 
\begin{align}
\label{eqb-7}
& \int_0^{t_1}\int_a^{s(t)} |u_t(t)|^2 + \frac{k}{2}\int_a^{s(t_1)}|u_z(t_1)|^2dz  +\frac{a_0}{6}(\varphi(s(t_1)))^3 \nonumber \\
\leq & \frac{k}{2}\int_a^{s_0}|u_z(0)|^2dz +  |\Phi(s(t_1), \tilde{u}(t_1, 1))| + |\Phi(s(0), \tilde{u}_0(1))|  \nonumber \\
& + |\hat{\beta}(b(t_1), \tilde{u}(t_1, 0))|  +  \frac{a_0}{6}(\varphi(s_0))^3  + |\beta'|_{L^{\infty}(\mathbb{R})}\frac{b^*}{H} \int_0^{t_1}|b_t(t)| dt  \nonumber \\
& -\int_0^{t_1} \frac{k}{2}|u_z(t, s(t))|^2s_t(t) dt \mbox{ for }t_1\in [0, T], 
\end{align}
where $\tilde{u}$ and $\tilde{u}_0$ are the same functions defined by (\ref{p2-0}) and  (\ref{p2-6}). 
Note that the last term of the right-hand side of (\ref{eqb-7}) can be dealt as 
\begin{align}
\label{eqb-8}
& -\int_0^{t_1} \frac{k}{2}|u_z(t, s(t))|^2s_t(t) dt =-\frac{k}{2}\int_0^{t_1}\biggl|\frac{1}{k}(u(t, s(t))s_t(t) \biggr|^2 s_t(t) dt \nonumber \\
& \leq \frac{a_0}{2k} \left(\frac{b^*}{H}\right)^3 \int_0^{t_1}|s_t(t)|^2 dt \leq \frac{a_0}{2k} \left(\frac{b^*}{H}\right)^3 C_1(1+F(t_1)), 
\end{align}
where $C_1$ is the same constant as in (\ref{eqb0-1}). 
Consequently, by recalling the estimate (\ref{eq4-25})-(\ref{eq4-27}) and using (\ref{eqb-8}) we also see that there exists a positive constant $C_2$ such that (\ref{eqb0-2}) holds. 
\end{proof}
\subsection{Proof of Theorem \ref{b-2}}\label{bb}
At the end of the paper, by using the uniform estimate with respect to time obtained in the previous section, we show Theorem \ref{b-2} on the large-time behavior of a solution of (P)$_{u_0, s_0, b}$. 
\newline
Let us assume (A1), (A2)', (A3), (A4)' and (A5). Then, by Theorem \ref{th2}, we have a solution $(s, u)$ of (P)$_{u_0, s_0, b}$ on $[0, T]$ for any $T>0$ such that $\varphi(a)\leq u\leq b^*H^{-1}$ on $[a, s(t)]$ for $t>0$. Also, by Lemma \ref{lem2}, the free boundary $s$ has a lower bound $s_*$ such that $s_*>a$. 
We put $F(t)=\int_0^t\beta(b(\tau)-Hu(\tau, a))d\tau$ for $t>0$ and $r_*=a+H (b^*)^{-1}|u_0|_{L^1(a, s_0)}$. Since $F(t)$ is monotone increasing function with respect to $t$, either of the following cases can be considered: 
\begin{itemize}
\item[(Case1):] $F(t)\leq b^*H^{-1}(r_{\varphi}-r_*)$ for $t>0$;
\item[(Case2):] there exists a positive constant $M>b^*H^{-1}(r_{\varphi}-r_*)$ and $t_0>0$ such that $b^*H^{-1}(r_{\varphi}-r_*)<F(t)\leq M$ for $t\geq t_0$;
\item[(Case3):] for any positive constant $M$, there exists $t_{M}>0$ such that $F(t)>M$ for any $t>t_{M}$. 
\end{itemize}
Now, we show that either of (Case1) or (Case3) holds by contradiction, namely, we assume (Case2). 
Then, we note that $s$ is bounded on $[0, \infty)$. Indeed,  from (\ref{eq3-2}), it holds that 
\begin{align}
\int_a^{s(t)}u(t) dz = \int_a^{s_0}u_0 dz + \int_0^t\beta(b(\tau)-Hu(t, a)) d\tau. \nonumber 
\end{align}
Therefore, by the boundedness of $u$, we see that 
\begin{align}
\label{eqb-43}
s(t) &\leq a+ \frac{1}{\varphi(a)}\biggl[\frac{b^*}{H}(s_0-a) + \int_0^t\beta(b(\tau)-Hu(t, a)) d\tau\biggr] \mbox{ for }t>0. 
\end{align}
Since $F(t)\leq M$ for any $t\geq t_0$, (\ref{eqb-43}) implies that $s$ is bounded on $[0, \infty)$. 
On account of the boundedness of $s$, (\ref{eqb0-1}) and (\ref{eqb0-2}) in Lemma \ref{b-3}, (\ref{eq5-2}) and (\ref{eq5-3}) we see that there exist $C'_1>0$ and $C'_2>0$ which is independent of time $t$ such that 
\begin{align}
\label{eqb-44}
\int_0^t |s_t(\tau)|^2 d\tau  + \int_0^t |\tilde{u}_y(\tau)|^2_{L^2(0, 1)} d\tau \leq C'_1 \mbox{ for }t>0, 
\end{align}
and 
\begin{align}
\label{eqb-45}
\int_{0}^t|\tilde{u}_t(\tau)|^2_{L^2(0, 1)}d\tau + |\tilde{u}_y(t)|^2_{L^2(0, 1)} \leq C'_2 \mbox{ for }t>0. 
\end{align}
Here, for $\{t_n\}$ such that $t_n\to \infty$ as $n\to \infty$, we put $\tilde{u}_n(t, y):=\tilde{u}(t+t_n, y)$ for $(t, y)\in [0, 1]\times [0, 1]$, $s_n(t):=s(t+t_n)$, $b_n(t):=b(t+t_n)$ for $t\in [0, 1]$. By (A2)', it is clear that $b_n \to b_{\infty}$ in $L^1(0, 1)$ as $n\to \infty$. Also, by (\ref{eqb-44}) and (\ref{eqb-45}) we see that $s_{nt}$ is bounded in $L^2(0, 1)$ and $\tilde{u}_n$ is bounded in $W^{1, 2}(0, 1; L^2(0, 1))\cap L^{\infty}(0, 1; H^1(0, 1))$ and can take a subsequence $\{n_j\}\subset \{n\}$ such that the following convergences holds for some $\tilde{u}_{\infty}\in H^1(0, 1)$ and $s_{\infty} \in \mathbb{R}$ satisfying $s_{\infty}>a$:
$$
\begin{cases}
\tilde{u}_{nj}(0) = \tilde{u}(t_{nj}) \to \tilde{u}_{\infty} \mbox{ in }C([0, 1]), \mbox{ weakly in }H^1(0, 1), \mbox{ and } s_{nj}(0)=s(t_{nj}) \to s_{\infty} \mbox{ in } \mathbb{R}, \\
\tilde{u}_{njt} \to 0 \mbox{ in } L^2(0, 1; L^2(0, 1)) \mbox{ and } s_{njt} \to 0 \mbox{ in } L^2(0, 1), \\
\tilde{u}_{nj} \to \tilde{u}_{\infty} \mbox{ in }C([0, 1]; L^2(0, 1)), \\
\hspace{1.8cm} \mbox{ weakly in }W^{1, 2}(0, 1;L^2(0, 1)), \mbox{ weakly -* in }L^{\infty}(0, 1; H^1(0, 1)), \\
s_{nj} \to s_{\infty} \mbox{ in }C([0, 1]), \mbox{ weakly in }W^{1, 2}(0, 1)
\end{cases}
$$
as $j\to \infty$.  The strong convergence of $\tilde{u}_{nj}$ is obtained by the Ascoli-Arzela theorem. Also, by the Sobolev's embedding theorem in one dimension (cf. (\ref{eqb-41})), it holds that for $y\in [0, 1]$, 
\begin{align}
\label{eqb-50}
& \int_0^1|\tilde{u}_n(t, y)-\tilde{u}_{\infty}(y)|^2 dt \leq C_e \int_0^1 |\tilde{u}_n(t)-\tilde{u}_{\infty}|_{H^1(0, 1)}|\tilde{u}_n(t)-\tilde{u}_{\infty}|_{L^2(0, 1)} dt. 
\end{align}
Thereofore, by the strong convergence of $\tilde{u}_{nj}$ and (\ref{eqb-50}) we see that 
\begin{align}
\label{eqb-51}
\tilde{u}_{nj}(y) \to \tilde{u}_{\infty}(y) \mbox{ in } L^2(0, 1) \mbox{ at }y=0, 1 \mbox{ as }j\to \infty. 
\end{align}
Now, 
for each $n$, $(s_{n}, \tilde{u}_{n})$ satisfies that 
$$
\begin{cases}
\tilde{u}_{nt}(t, y)-\frac{k}{(s_n(t)-a)^2}\tilde{u}_{nyy}(t, y)=\frac{ys_{nt}(t)}{s_n(t)-a}\tilde{u}_{ny}(t, y) \mbox{ for }(t, y)\in Q(1), \\
-\frac{k}{s_n(t)-a}\tilde{u}_{ny}(t, 0)=\beta(b_n(t)-H\tilde{u}_n(t, 0)) \mbox{ for }t\in(0, 1), \\
-\frac{k}{s_n(t)-a}\tilde{u}_{ny}(t, 1)=\tilde{u}_n(t, 1)s_{nt}(t) \mbox{ for }t\in (0, 1),  \\
s_{nt}(t)=a_0(\tilde{u}_n(t, 1)-\varphi(s_n(t))) \mbox{ for }t\in (0, 1).
\end{cases}
$$
Using the convergences of $\tilde{u}_{nj}$ and $s_{nj}$ we infer that $\tilde{u}_{\infty} \in H^2(0, 1)$ and 
\begin{align}
\label{eqb-57}
-\frac{k\tilde{u}_{\infty yy}}{(s_{\infty}-a)^2} =0 \mbox{ a.e. on } (0, 1). 
\end{align}
Also, by noting that 
\begin{align}
\label{eqb-58}
& |\beta(b_n(t)-H\tilde{u}_n(t, 0))-\beta(b_{\infty}-H\tilde{u}_{\infty}(0))| \nonumber \\
\leq & |\beta'|_{L^{\infty}(\mathbb{R})} (|b_n(t)-b_{\infty}| + H|\tilde{u}_n(t, 0)-\tilde{u}_{\infty}(0)|), 
\end{align}
we easliy see that $\beta(b_{nj}-H\tilde{u}_{nj}(0)) \to \beta(b_{\infty}-H\tilde{u}_{\infty}(0))$ in $L^1(0, 1)$ as $j\to \infty$ and that 
\begin{align}
\label{eqb-59}
-\frac{1}{s_{\infty}-a}\tilde{u}_{\infty y}(0)=\beta(b_{\infty}-H\tilde{u}_{\infty}(0)), \ -\frac{1}{s_{\infty}-a}\tilde{u}_{\infty y}(1)=0. 
\end{align}
Moreover, the strong convergences of $s_{njt}$, $s_{nj}$ and (\ref{eqb-51}) yield 
\begin{align}
\label{eqb-60}
\tilde{u}_{nj}(\cdot, 1)-\varphi(s_{nj}(\cdot)) \to 0 =\tilde{u}_{\infty}(1)-\varphi(s_{\infty}) \mbox{ in }L^2(0, 1) \mbox{ as }j\to \infty.
\end{align}
As a result, by (\ref{eqb-57})-(\ref{eqb-60}) we see that $(s_{\infty}, \tilde{u}_{\infty})$ is a solution of $(\tilde{\mbox{P}})_{\infty}$ and by Lemma \ref{b-1} $\varphi(s_{\infty})\geq b_{\infty}H^{-1}$ and $\tilde{u}_{\infty}(y)=\varphi(s_{\infty})$ for $y\in (0, 1)$. 
This is a contradiction for $W=\phi$ (see Lemma \ref{b-1} below). Therefore, we conclude either of (Case1) or (Case3).
If (Case3), then this with (\ref{eqb-00}) means (i) of Theorem \ref{b-3}. On the other hand, since $F(t)$ is monotone incleasing function with respect to time $t$, if (Case1) then $\lim_{t \to \infty} F(t)\leq b^*H^{-1}(r_{\varphi}-s_*)$. Also, from (\ref{eq3-2}), we have that 
\begin{align}
s(t) &\leq a+ \frac{1}{\varphi(a)}\biggl[\frac{b^*}{H}(s_0-a) + \int_0^t\beta(b(\tau)-Hu(t, a)) d\tau\biggr] \nonumber \\
& \leq a+ \frac{1}{\varphi(a)}\frac{b^*}{H}\biggl[(s_0-a) +r_{\varphi}-r_*\biggr] \mbox{ for }t>0, \nonumber 
\end{align}
which implies (ii) of Theorem \ref{b-3}. This concludes that Theorem \ref{b-3} holds. 



\subsection*{Acknowledgments}
This research was supported by  Grant-in-Aid No.16K17636, JSPS.

\end{document}